\documentclass{my_degruyter-proceedings}          

\title{Explicit local time-stepping methods for time-dependent wave propagation}
\headlinetitle{Explicit local time-stepping methods}

\lastnameone{Grote}
\firstnameone{Marcus J.}
\nameshortone{M.\,J.~Grote}
\addressone{Institute of Mathematics, University of Basel, Rheinsprung 21, 4051 Basel}
\countryone{Switzerland}
\emailone{marcus.grote@unibas.ch}

\lastnametwo{Mitkova}
\firstnametwo{Teodora}
\nameshorttwo{T.~Mitkova}
\addresstwo{Department of Mathematics, University of Fribourg, 
Chemin du Mus\'{e}e 23, 1700 Fribourg}
\countrytwo{Switzerland}
\emailtwo{teodora.mitkova@unifr.ch}

\abstract{Semi-discrete Galerkin formulations of transient wave equations, either with
conforming or discontinuous Galerkin finite element discretizations, 
typically lead to large systems of
ordinary differential equations. When explicit time integration is used, the time-step 
is constrained by the smallest elements in the mesh for numerical stability, possibly a high
price to pay. 
To overcome that overly restrictive stability constraint
on the time-step, yet without resorting to implicit methods, explicit 
local time-stepping schemes (LTS) are presented here for transient wave equations
either with or without damping. In the undamped case, leap-frog based LTS methods lead to high-order
explicit LTS schemes, which conserve the energy. In the damped case, when energy is no longer conserved, 
Adams-Bashforth based LTS methods also lead to explicit LTS schemes of 
arbitrarily high accuracy. When combined with a finite element discretization in space with an 
essentially diagonal mass matrix, the resulting time-marching schemes 
are fully explicit and thus inherently parallel. 
Numerical experiments with continuous and discontinuous Galerkin finite element discretizations validate the theory and illustrate
the usefulness of these local time-stepping methods.}

\keywords{Time dependent waves, damped waves, finite element methods, mass lumping, discontinuous Galerkin methods, explicit time integration, 
          adaptive refinement, local time-stepping}

\classification{65N30}

\researchsupported{This work was partly supported by the Swiss National Science Foundation.}

\acknowledgments{We thank Manuela Utzinger and Max Rietmann for their help with the numerical experiments.}

\firstpage{1}

\newcommand{\jmp}[1]{[\![#1]\!]}              
\newcommand{\mvl}[1]{\{\!\!\{#1\}\!\!\}}      

\begin{document}

\section{Introduction}
The efficient numerical simulation of transient wave phenomena is of fundamental importance in a wide range of applications from acoustics, 
electromagnetics or elasticity. Although classical finite difference methods remain a viable approach in rectangular geometry on Cartesian meshes,
their usefulness is quite limited in the presence of complex geometry, such as cracks, sharp corners or irregular material interfaces. In contrast, 
finite element methods (FEMs) easily handle unstructured meshes and local refinement. Moreover, their extension to high order is straightforward, 
a key feature to keep numerical dispersion minimal.

Semi-discrete finite element Galerkin approximations typically 
lead to a system of ordinary differential equations. However, if explicit time-stepping is
subsequently employed, the mass matrix arising from the spatial discretization by standard conforming finite elements must be inverted at each time-step: 
a major drawback in terms of efficiency. To overcome that difficulty, various ``mass lumping'' techniques have been proposed, which effectively replace 
the mass matrix by a diagonal approximation. While straightforward for piecewise linear elements \cite{GrMi_Ciar78,GrMi_Hug00}, mass lumping techniques require 
special quadrature rules and additional degrees of freedom at higher order to preserve the accuracy and guarantee numerical stability 
\cite{GrMi_CJRT01,GrMi_GT06}.

Discontinuous Galerkin (DG) methods offer an attractive and increasingly popular alternative for the spatial discretization of time-dependent hyperbolic 
problems \cite{GrMi_AMM06,GrMi_CS89,GrMi_GSS06,GrMi_GSS07,GrMi_HW08,GrMi_RW03}. Not only do they accommodate elements of various types and shapes, 
irregular non-matching grids, and even locally varying polynomial order, and hence offer greater flexibility in the mesh design. They also lead to a 
block-diagonal mass matrix, with block size equal to the number of degrees of freedom per element; in fact, for a judicious choice of (locally orthogonal) 
shape functions, the mass matrix is truly diagonal. Thus, when a spatial DG discretization is combined with explicit time integration, the resulting 
time-marching scheme will be truly explicit and inherently parallel.

In the presence of complex geometry, adaptivity and mesh refinement are certainly key for the efficient numerical simulation of wave phenomena. 
However, locally refined meshes impose severe stability constraints on explicit time-marching schemes, where the maximal time-step allowed by the 
CFL condition is dictated by the smallest elements in the mesh. When mesh refinement is restricted to a small region, the use of implicit methods, 
or a very small time-step in the entire computational domain, are a very high price to pay. To overcome this overly restrictive stability constraint, 
various local time-stepping (LTS) schemes \cite{GrMi_CFJ03, GrMi_CFJ06, GrMi_DFFL10} were developed, which use either implicit time-stepping or explicit smaller 
time-steps, but only where the smallest elements in the mesh are located.

Since DG methods are inherently local, they are particularly well-suited for the development of explicit local time-stepping schemes \cite{GrMi_HW08}. 
By combining the sympletic St\"ormer-Verlet method with a DG discretization, Piperno derived a symplectic LTS scheme for Maxwell's equations in a 
non-conducting medium \cite{GrMi_Pip06}, which is explicit and second-order accurate. In \cite{GrMi_MPFC08}, Montseny et al. combined a similar recursive 
integrator with discontinuous hexahedral elements. Starting from the so-called arbitrary high-order derivatives (ADER) DG approach, alternative explicit 
LTS methods for Maxwell's equations \cite{GrMi_TDMS09} and for elastic wave equations \cite{GrMi_DKT07} were proposed. In \cite{GrMi_EJ10}, the LTS approach 
from Collino et al. \cite{GrMi_CFJ03,GrMi_CFJ06} was combined with a DG-FE discretization for the numerical solution of symmetric first-order 
hyperbolic systems. Based on energy conservation, that LTS approach is second-order and explicit inside the coarse and the fine mesh; at the interface, 
however, it nonetheless requires at every time-step the solution of a linear system. More recently, Constantinescu and Sandu devised multirate explicit 
methods for hyperbolic conservation laws, which are based on both Runge-Kutta and Adams-Bashforth schemes combined with a finite volume discretization 
\cite{GrMi_CS07, GrMi_CS09}. Again these multirate schemes are limited to second-order accuracy.

Starting from the standard leap-frog method, Diaz and Grote proposed energy conserving fully explicit LTS integrators of arbitrarily high accuracy for the 
classical wave equation \cite{GrMi_DG09}; that approach was extended to Maxwell's equations in \cite{GrMi_GM10} for non-conductive media. By blending the 
leap-frog and the Crank-Nicolson methods, a second-order LTS scheme was also derived there for (damped) electromagnetic waves in conducting media, 
yet this approach cannot be readily extended beyond order two.
To achieve arbitrarily high accuracy in the presence of dissipation, while remaining fully explicit, 
explicit LTS methods for 
damped wave equations based on Adams-Bashforth (AB) multi-step schemes
were proposed in \cite{GrMi_GM11} -- see also \cite{GrMi_HWW08}. They can also be interpreted as particular
approximations of exponential-Adams multistep methods \cite{GrMi_HO11}.

The rest of the paper is organized as follows. In Section 2, we first recall 
the standard continuous, the symmetric interior penalty (IP) DG and the nodal DG formulations. Next in Section 3, we consider leap-frog based LTS methods, 
both for the undamped and the damped wave equation. In the undamped case, we show how to derive explicit LTS methods of arbitrarily high order; these methods 
also conserve a discrete version of the energy. In the damped case, we present a second-order LTS method by blending the leap-frog and the Crank-Nicolson 
scheme; however, this approach does not easily extend to higher order. To achieve arbitrarily high accuracy even in the presence of dissipation, we then 
consider LTS methods based on Adams-Bashforth multi-step schemes in Section 4. 
Finally in Section 5, we present numerical experiments in one and two space 
dimensions, which validate the theory and underpin both the stability properties and the usefulness of these high-order explicit LTS schemes.

\section{Finite element discretizations for the wave equation} \label{sec:GrMi_fem_wave}
We consider the damped wave equation
\begin{equation} \label{eq:GrMi_model_eq_1} \begin{split}
u_{tt} + \sigma u_t - \nabla \cdot (c^2 \nabla u) & = f \quad \ \mbox{in } \ \Omega \times (0, T) \,,\\
u(\cdot, t) & = 0 \quad \ \mbox{on } \ \partial \Omega \times (0, T)\,, \\
u(\cdot, 0) = u_0\,, \  u_t(\cdot, 0) & = v_0 \quad \mbox{in } \Omega \,, \\
\end{split} \end{equation}
where $\Omega$ is a bounded Lipschitz domain in ${\mathbb R}^d$, $d = 1, 2, 3$. Here, $f\in L^2(0,T; L^2(\Omega))$ is a (known) source term,
while $u_0\in H^1_0(\Omega)$ and $v_0\in L^2(\Omega)$ are prescribed initial conditions. At the boundary, $\partial \Omega$, we impose
a homogeneous Dirichlet boundary condition, for simplicity. We assume that the damping coefficient, $\sigma = \sigma(x)$ and 
the speed of propagation $c = c(x)$ are piecewise smooth and satisfy the bounds
$$
0 \leq \sigma(x) \leq \sigma^* < \infty\,, \quad 0 < c_* \leq c(x) \leq c^* < \infty\,, \quad x \in \overline{\Omega}\,.
$$

We shall now discretize (\ref{eq:GrMi_model_eq_1}) in space by using any one of the following three distinct FE discretizations:
continuous ($H^1$-conforming) finite elements with mass lumping, a symmetric IP-DG
discretization, or a nodal DG method. Thus, we consider shape-regular
meshes ${\mathcal T}_h$ that partition the domain $\Omega$ into disjoint elements $K$, such that
$\overline \Omega = \cup_{K \in {\mathcal T}_h} \overline K$. The elements are triangles or quadrilaterals in two space dimensions,
and tetrahedra or hexahedra in three dimensions, respectively. The diameter of element $K$ is denoted by $h_K$ and the mesh size,
$h$, is given by $h = \max_{K \in {\mathcal T}_h} h_K$.

\subsection{Continuous Galerkin formulation}
The continuous ($H^1$-conforming) Galerkin formulation of (\ref{eq:GrMi_model_eq_1}) starts from its weak formulation:
find
$u \in [0, T] \to H^1_0(\Omega)$ such that
\begin{equation} \label{eq:GrMi_weak_eq} \begin{split}
(u_{tt}, \varphi) + (\sigma u_t, \varphi) + (c \nabla u, c \nabla \varphi) & = (f, \varphi)
\quad \forall \ \varphi \in H_0^1(\Omega)\,, \quad t \in (0, T) \,, \\
u(\cdot, 0) = u_0 \,, \ u_t(\cdot, 0) & = v_0 \,,
\end{split} \end{equation}
where $(\cdot, \cdot)$ denotes the standard $L^2$ inner product over $\Omega$. It is well-known that (\ref{eq:GrMi_weak_eq}) is well-posed and has a unique
solution \cite{GrMi_LM72}.

For a given partition ${\mathcal T}_h$ of $\Omega$, assumed polygonal (2d) or polyhedral (3d) for simplicity, and an approximation order $\ell \geq 1$, 
we shall approximate the solution $u(\cdot, t)$ of (\ref{eq:GrMi_weak_eq}) in the finite element space
$$
V^h := \left \{ \varphi \in H_0^1(\Omega) \ : \ \varphi|_{K} \in {\mathcal S}^{\ell}(K) \ \
\forall \ K \in {\mathcal T}_h \right \}\,,
$$
where ${\cal S}^{\ell}(K)$ is the space ${\cal P}^{\ell}(K)$ of polynomials of total degree at most $\ell$ on $K$ if $K$ is a
triangle or a tetrahedra, or the space ${\cal Q}^{\ell}(K)$ of polynomials of degree at most $\ell$ in each variable on $K$ if $K$
is a parallelogram or a parallelepiped. Here, we consider the following semi-discrete Galerkin approximation of
(\ref{eq:GrMi_weak_eq}): find $u^h : [0, T] \to V^h$ such that
\begin{equation} \label{eq:GrMi_fe_eq} \begin{split}
(u_{tt}^h, \varphi) + (\sigma u_{t}^h, \varphi) + (c \nabla u^h, c \nabla \varphi) & = (f, \varphi)
\quad \forall \, \varphi \in V^h\,, \quad t \in (0, T) \,, \\
u^h(\cdot, 0) = {\Pi}_h u_0 \,, \ u^h_t(\cdot, 0) &= {\Pi}_h v_0 \,.
\end{split} \end{equation}
Here, ${\Pi}_h$ denotes the $L^2$-projection onto $V^h$.

The semi-discrete formulation (\ref{eq:GrMi_fe_eq}) is equivalent to the second-order system of ordinary differential equations
\begin{equation} \label{eq:GrMi_sdisc_cfem} \begin{split}
{\mathbf M} \, \frac{d^2 {\mathbf U}}{d t^2}(t) + {\mathbf M}_{\sigma} \, \frac{d {\mathbf U}}{d t}(t) +
{\mathbf K} \, {\mathbf U}(t) & = {\mathbf F}(t)\,, \qquad t \in (0, T)\,, \\
{\mathbf M} \, {\mathbf U}(0) = u_0^h \,, \qquad {\mathbf M} \, \frac{d \mathbf U}{d t}(0) & = v_0^h \,.
\end{split} \end{equation}
Here, ${\mathbf U}$ denotes the vector whose components are the time-dependent coefficients of the representation of $u^h$ with respect to 
the finite element nodal basis of $V_h$, $\mathbf M$ the mass matrix, $\mathbf K$ the stiffness matrix, whereas  ${\mathbf M}_{\sigma}$ denotes
the mass matrix  with weight $\sigma$. The matrix ${\mathbf M}$ is sparse, symmetric and positive
definite, whereas the matrices $\mathbf K$ and ${\mathbf M}_{\sigma}$ are sparse, symmetric and, in general, only positive semi-definite. 
In fact, $\mathbf K$ is positive definite, unless Neumann boundary conditions would be imposed
in (\ref{eq:GrMi_model_eq_1}) instead. Since we shall never need to invert $\mathbf K$, our derivation also applies to
the semi-definite case with purely Neumann boundary conditions.

Usually, the mass matrix ${\mathbf M}$ is not diagonal, yet needs to be inverted at every time-step of any explicit time integration scheme.
To overcome this diffculty, various mass lumping techniques have been developed \cite{GrMi_CJRT01, GrMi_Coh02, GrMi_CJT93, GrMi_CJT94},
which essentially replace $\mathbf M$ with a diagonal approximation by computing the required integrals over each element $K$ with judicious quadrature
rules that do not effect the spatial accuracy \cite{GrMi_BD76}.

\subsection{Interior penalty discontinuous Galerkin formulation}
Following \cite{GrMi_GSS06} we briefly recall the symmetric interior penalty (IP) DG formulation of (\ref{eq:GrMi_model_eq_1}). For simplicity,
we assume in this section that the elements are triangles or parallelograms in two space dimensions and tetrahedra or parallelepipeds
in three dimensions, respectively. Generally, we allow for irregular ($k$-irregular) meshes with hanging nodes \cite{GrMi_BPW09}. We denote
by ${\mathcal E}^{\mathcal I}_h$ the set of all interior edges of ${\mathcal T}_h$, by ${\mathcal E}^{\mathcal B}_h$ the set of all
boundary edges of ${\mathcal T}_h$, and set ${\mathcal E}_h = {\mathcal E}^{\mathcal I}_h \cup {\mathcal E}^{\mathcal B}_h$. Here, we
generically refer to any element of ${\mathcal E}_h$ as an ``edge'', that is a real edge in 2d and a face in 3d.

For a piecewise smooth function $\varphi$, we introduce the following trace operators. Let $e \in {\mathcal E}^{\mathcal I}_h$ be an
interior edge shared by two elements $K^+$ and $K^-$ with unit outward normal vectors ${\mathbf n}^{\pm}$, respectively. Denoting
by $v^{\pm}$ the trace of $v$ on $\partial K^\pm$ taken from within $K^\pm$, we define the jump and the average on $e$ by
$$
\jmp{\varphi} := \varphi^+ {\mathbf n}^+ + \varphi^-{\mathbf n}^-  \,, \qquad
\mvl{\varphi}:= (\varphi^+ + \varphi^-)/2\,.
$$
On every boundary edge $e \in {\mathcal E}^{\mathcal B}_h$, we set $\jmp{\varphi} := \varphi {\mathbf n}$ and
$\mvl{\varphi} := \varphi$. Here, $\mathbf{n}$ is the outward unit normal to the domain boundary $\partial \Omega$.

For a piecewise smooth vector-valued function ${\psi}$, we analogously define the average across interior faces by
$\mvl{{\psi}}:=({\psi}^+ + {\psi}^-)/2$, and on boundary faces we set $\mvl{{\psi}}:={\psi}$. The jump of a
vector-valued function will not be used.  For a vector-valued function ${\psi}$ with continuous normal components across a face $e \in {\mathcal E}_h$, the trace identity
\begin{equation*}
\varphi^+\left( \mathbf{n}^{+} \cdot {\psi}^+ \right) + \varphi^-\left( \mathbf{n}^{-} \cdot {\psi}^- \right)
 = \jmp{\varphi} \cdot \mvl{{\psi}} \quad \mbox{on } e\,,
\end{equation*}
immediately follows from the above definitions.

For a given partition ${\cal T}_h$ of $\Omega$ and an approximation order $\ell \geq 1$, we wish to approximate the solution
$u(t, \cdot)$ of (\ref{eq:GrMi_model_eq_1}) in the finite element space
\begin{equation*}
V^h :=  \left \{ \varphi \in L^2(\Omega): \, \varphi|_K\in {\cal S}^{\ell}(K) ~~ \forall K\in {\cal T}_h \right \} \,,
\end{equation*}
where ${\mathcal S}^{\ell}(K)$ ist the space ${\mathcal P}^{\ell}(K)$ (for triangles or tetrahedra) or
${\mathcal Q}^{\ell}(K)$ (for quadrilaterals or hexahedra). Thus, we consider the following (semidiscrete) DG approximation of (\ref{eq:GrMi_model_eq_1}): 
find
$u^h : [0, T] \to V^h$ such that
\begin{equation} \label{eq:GrMi_dg_eq} \begin{split}
(u_{tt}^h, \varphi) + (\sigma u_{t}^h, \varphi) + a_h(u^h, \varphi) & = (f,\varphi)
\qquad \forall \, \varphi \in V^h\,, \quad t \in (0, T) \,, \\
u^h(\cdot, 0) = {\Pi}_h u_0 \,, \ u^h_t(\cdot, 0) &= {\Pi}_h v_0 \,.
\end{split} \end{equation}
Here, ${\Pi}_h$ again denotes the $L^2$-projection onto $V^h$ whereas the DG bilinear form $a_h(\cdot, \cdot)$, defined on $V^h \times V^h$, is given by
\begin{equation} \label{eq:GrMi_dg_form} \begin{split}
a_{h}(u, \varphi) :=  & \sum_{K \in {\mathcal T}_h} \int_K c^2 \, \nabla u \cdot \nabla \varphi \,dx -
\sum_{e \in {\mathcal E}_h} \int_{e} \jmp{u} \cdot \mvl{c^2 \, \nabla \varphi} \, dA \\
& -\sum_{e \in{\mathcal E}_h} \int_{e} \jmp{\varphi} \cdot \mvl{c^2 \, \nabla u} \, dA +
\sum_{e \in {\mathcal E}_h} \int_{e}\, {\tt a} \, \jmp{u} \cdot \jmp{\varphi} \,dA \, .
\end{split} \end{equation}
The last three terms in (\ref{eq:GrMi_dg_form}) correspond to jump and flux terms at element boundaries; they vanish when
$u, \varphi, \in H^1_0(\Omega)\cap H^{1+m}(\Omega)$ for $m > \frac{1}{2}$. Hence, the above semi-discrete DG formulation (\ref{eq:GrMi_dg_eq})
is consistent with the original continuous problem~(\ref{eq:GrMi_weak_eq}).

In (\ref{eq:GrMi_dg_form}) the function ${\tt a}$ penalizes the jumps of $u$ and $v$ over the faces of ${\mathcal T}_h$. To define it, we first
introduce the functions $\tt h$ and $\tt c$ by
{\small $$
{\tt h} |_e = \left \{ \begin{array}{ll}
\min \{ h_{K^+}, h_{K^-} \}, & e \in {\mathcal E}_h^{\mathcal I}\,,\\[5pt] h_K, & e \in {\mathcal E}_h^{\mathcal B}\,,
\end{array} \right. \
{\tt c} |_e(x) = \left \{ \begin{array}{ll}
\max \{ c|_{K^+}(x), c|_{K^-}(x) \}, & e \in {\mathcal E}_h^{\mathcal I} \,,\\[5pt] c|_K(x), & e \in {\mathcal E}_h^{\mathcal B} \,.
\end{array} \right.
$$}
Then, on each $e \in {\mathcal E}_h$, we set
\begin{equation} \label{eq:GrMi_param_alpha}
{\tt a} |_e := \alpha \, {\tt c}^2 {\tt h}^{-1}\,,
\end{equation}
where $\alpha$ is a positive parameter independent of the local mesh sizes and the coefficient $c$. There exists a threshold value
$\alpha_{min} > 0$, which depends only on the shape regularity of the mesh  and the approximation order $\ell$ such that for 
$\alpha \geq \alpha_{min}$ the DG bilinear form $a_h$ is coercive and, hence, the discretization is stable \cite{GrMi_ABCM01, GrMi_AD10}. Throughout the
rest of the paper we shall assume that $\alpha \geq \alpha_{min}$ so that the semi-discrete problem (\ref{eq:GrMi_dg_eq}) has
a unique solution which converges with optimal order \cite{GrMi_GSS06,GrMi_GSS07,GrMi_GSS08, GrMi_GS09}. In \cite{GrMi_GSS06,GrMi_GS09}, a detailed convergence analysis
and numerical study of the IP-DG method for (\ref{eq:GrMi_dg_form}) with $\sigma = 0$ was presented. In particular, optimal a-priori estimates
in a DG-energy norm and the $L^2$-norm were derived. This theory immediately generalizes to the case $\sigma \geq 0$. For sufficiently
smooth solutions, the IP-DG method (\ref{eq:GrMi_dg_form}) thus yields the optimal $L^2$-error estimate of order ${\mathcal O}(h^{\ell + 1})$.

The semi-discrete IP-DG formulation (\ref{eq:GrMi_dg_eq}) is equivalent to the second-order system of ordinary differential equations (\ref{eq:GrMi_sdisc_cfem}).
Again, the mass matrix ${\mathbf M}$ is sparse, symmetric and positive definite. Yet because individual elements decouple,
${\mathbf M}$ (and ${\mathbf M}_{\sigma}$) is block-diagonal with block size equal to the number of degrees of freedom per element.
Thus, ${\mathbf M}$ can be inverted at very low computational cost. In fact, for a judicious choice of (locally orthogonal) shape functions,
${\mathbf M}$ is truly diagonal.

\subsection{Nodal discontinuous Galerkin formulation}
Finally, we briefly recall the nodal discontinuous Galerkin formulation from \cite{GrMi_HW08} for the spatial discretization of (\ref{eq:GrMi_model_eq_1})
rewritten as a first-order system. To do so, we first let $v := u_t$, ${\mathbf w}:=-\nabla u$, and thus we rewrite (\ref{eq:GrMi_model_eq_1}) as the
first-order hyperbolic system:
\begin{equation} \label{eq:GrMi_model_eq_tmp} \begin{split}
v_t + \sigma v + \nabla \cdot (c^2 {\mathbf w}) & = f \quad \quad \quad \mbox{in } \ \Omega \times (0, T) \,,\\
{\mathbf w}_t + \nabla v & = {\mathbf 0} \quad \quad \quad \mbox{in } \ \Omega \times (0, T) \,,\\
v(\cdot, t) = 0\,, \  {\mathbf w}(\cdot, t) & = {\mathbf 0} \quad \quad \quad  \mbox{on } \ \partial \Omega \times (0, T)\,, \\
v(\cdot, 0) = v_0\,, \ {\mathbf w}(\cdot, 0) & = - \nabla u_0  \quad \mbox{in } \Omega \,,
\end{split} \end{equation}
or in more compact notation as
\begin{equation} \label{eq:GrMi_model_eq_2}
{\mathbf q}_t + {\mathbf \Sigma} \, {\mathbf q} + \nabla \cdot {\mathcal F}({\mathbf q}) = {\mathbf S} \,,
\end{equation}
with
\begin{equation*}
{\mathbf q} = \left (\begin{array}{c} v \\ {\mathbf w}\end{array} \right), \,
 {\mathcal F}({\mathbf q}) =    \left (\begin{array}{c} c^2\, {\mathbf w}^{\top} \\ v\, {\mathbf I}_{d\times d}\end{array} \right),\, {\mathbf\Sigma} = \left ( \begin{array}{rr}
{\mathbf \sigma} & {\mathbf 0} \\ {\mathbf 0} & {\mathbf 0} \end{array} \right ), \, {\mathbf S} =   \left (\begin{array}{c} f \\ {\mathbf 0}\end{array} \right).
\end{equation*}
Following \cite{GrMi_HW08}, we now consider the following nodal DG formulation of (\ref{eq:GrMi_model_eq_2}):
find ${\mathbf q}^h : [0, T] \to {\mathbf V}^h$ such that
\begin{equation} \label{eq:GrMi_dg1_pr}
({\mathbf q}_{t}^h, {\psi}) + ({\mathbf \Sigma} \, {\mathbf q}^h, {\psi}) +
{\widetilde a}_h({\mathbf q}^h, {\psi}) = ({\mathbf S}, {\psi})
\qquad \forall \, {\psi} \in {\mathbf V}^h\,, \quad t \in (0, T) \,.
\end{equation}
Here ${\mathbf V}^h$ denotes the finite element space
\begin{equation*}
{\mathbf  V}^h :=  \left \{ {\psi} \in L^2(\Omega)^{d+1}: \, {\psi}|_K\in {\cal S}^{\ell}(K)^{d+1} ~~ \forall K\in {\cal T}_h \right \}
\end{equation*}
for a given partition ${\cal T}_h$ of $\Omega$ and an approximation order $\ell \geq 1$. The nodal-DG bilinear form ${\widetilde a}_h(\cdot , \cdot)$ is defined on
${\mathbf V}^h \times {\mathbf V}^h$ as
$$
{\widetilde a}_h({\mathbf q}, {\psi}) :=
\sum_{K \in {\cal T}_h} \int_K \left ( \nabla \cdot {\mathcal F}({\mathbf q}) \right ) \cdot {\psi} \, dx -
\sum_{e \in {\mathcal E}_h} \int_e \left ( {\mathbf n} \cdot {\mathcal F}({\mathbf q}) -
({\mathbf n} \cdot {\mathcal F}({\mathbf q}))^* \right ) \cdot {\psi} \, dA\,,
$$
where $({\mathbf n} \cdot {\mathcal F}({\mathbf q}))^*$ is a suitably chosen numerical flux in the unit normal direction $\mathbf n$. The
semi-discrete problem (\ref{eq:GrMi_dg1_pr}) has a unique solution, which converges with optimal order in the $L^2$-norm \cite{GrMi_HW08}.

The semi-discrete nodal DG formulation (\ref{eq:GrMi_dg1_pr}) is equivalent to the first-order system of ordinary differential equations
\begin{equation} \label{eq:GrMi_sd_pr2}
{\mathbf M} \frac{d {\mathbf Q}}{dt}(t) + {\mathbf M}_\sigma \, {\mathbf Q}(t) + {\mathbf C} \, {\mathbf Q}(t) = {\mathbf F}(t) \,,
\qquad t \in (0, T)\,.
\end{equation}
Here ${\mathbf Q}$ denotes the vector whose components are the coefficients of ${\mathbf q}^h$ with respect to the finite element basis of
${\mathbf V}^h$ and $\mathbf C$ the DG stiffness matrix. Because the individual elements decouple, the mass matrices ${\mathbf M}$ and
${\mathbf M}_{\sigma}$ are sparse, symmetric, positive semi-definite and block-diagonal. Moreover, ${\mathbf M}$ is positive definite and can be inverted at
very low computational cost.

\section{Leap-frog based LTS methods}\label{sec:GrMi_LTS-LF}
Starting from the well-known second-order ``leap-frog'' scheme, we now derive an explicit second-order LTS scheme for undamped waves. By using the
modified equation approach, we then derive an
explicit fourth-order LTS method for undamped waves. Finally, by blending the leap-frog and the Crank-Nicolson methods,
we also present a second-order LTS scheme for damped waves.

We consider the semi-discrete model equation
\begin{equation}\label{eq:GrMi_semi_disc_model}
{\mathbf M} \, \frac{d^2 {\mathbf U}}{d t^2}(t) + {\mathbf M}_{\sigma} \, \frac{d {\mathbf U}}{d t}(t) +
{\mathbf K} \, {\mathbf U} = {\mathbf F}(t) \,,
\end{equation}
where ${\mathbf M}$ and ${\mathbf M}_{\sigma}$ are symmetric positive definite matrices and $\mathbf K$ is a symmetric positive semi-definite matrix.
Moreover, we assume that the mass matrix ${\mathbf M}$ is (block-) diagonal, as in (\ref{eq:GrMi_sdisc_cfem}).
We remark, however, that the time integration techniques presented below are also applicable to other spatial discretizations of the damped wave equation
that lead to the same semi-discrete form (\ref{eq:GrMi_semi_disc_model}).

Because ${\mathbf M}$ is assumed essentially diagonal, ${\mathbf M}^{\frac{1}{2}}$ can be explicitly computed and inverted at low cost.
Thus, we multiply (\ref{eq:GrMi_semi_disc_model}) by ${\mathbf M}^{-\frac{1}{2}}$ to obtain
\begin{equation} \label{eq:GrMi_semi_disc_model_final}
\frac{d^2 {\mathbf z}}{d t^2}(t) + {\mathbf D} \, \frac{d {\mathbf z}}{d t}(t) + {\mathbf A} \, {\mathbf z}(t) = {\mathbf R}(t) \,,
\end{equation}
with ${\mathbf z} = {\mathbf M}^{\frac{1}{2}} {\mathbf U}$,
${\mathbf D} = {\mathbf M}^{-\frac{1}{2}} {\mathbf M}_{\sigma} {\mathbf M}^{-\frac{1}{2}}$,
${\mathbf A} = {\mathbf M}^{-\frac{1}{2}} {\mathbf K} {\mathbf M}^{-\frac{1}{2}}$ and
${\mathbf R} = {\mathbf M}^{-\frac{1}{2}} {\mathbf F}$. Note that $\mathbf A$ is also sparse and symmetric positive
semidefinite. For undamped waves, ${\mathbf D}$ vanishes and hence energy is conserved, whereas for damped waves ${\mathbf D}$ is nonzero and 
energy is dissipated. We shall distinguish these two situations in the derivation of local time-stepping schemes below.

\subsection{Second-order method for undamped waves}\label{sec:GrMi_LTS-LF2}
For undamped waves, (\ref{eq:GrMi_semi_disc_model_final}) reduces to
\begin{equation} \label{eq:GrMi_semi_disc_model_sigma_0}
\frac{d^2 {\mathbf z}}{d t^2} + {\mathbf A} \, {\mathbf z} = {\mathbf R} \,.
\end{equation}
Since for any $f \in {\mathcal C}^2$, we have
\begin{equation} \label{eq:GrMi_exact_formula}
f(t + \Delta t) - 2 \, f(t) + f(t - \Delta t) = \Delta t^2 \, \int_{-1}^{1} (1 - \vert \theta \vert) f''(t + \theta \, \Delta t)
\, d \theta \,,
\end{equation}
the exact solution ${\mathbf z}(t)$ of (\ref{eq:GrMi_semi_disc_model_sigma_0}) satisfies
\begin{equation} \label{eq:GrMi_integral}
{\mathbf z}(t + \Delta t) - 2 \, {\mathbf z}(t) + {\mathbf z}(t - \Delta t) = \Delta t^2 \, \int_{-1}^{1} (1 - \vert \theta \vert)
\left ( {\mathbf R}(t + \theta \, \Delta t) - {\mathbf A}{\mathbf z}(t + \theta \, \Delta t) \right )d \theta \,.
\end{equation}
The integral on the right side of (\ref{eq:GrMi_integral}) represents a weighted average of ${\mathbf R}(s) - {\mathbf A} \,{\mathbf z}(s)$
over the interval $[t - \Delta t, t + \Delta t]$, which needs to be approximated in any numerical algorithm. If we approximate in (\ref{eq:GrMi_integral})
${\mathbf A} \, {\mathbf z}(t + \theta \, \Delta t)$ and ${\mathbf R}(t + \theta \, \Delta t)$ by ${\mathbf A} \, {\mathbf z}(t)$ and
${\mathbf R}(t)$, respectively, and evaluate the remaining $\theta$-dependent integral, we obtain the well-known
second-order leap-frog scheme with time-step $\Delta t$,
\begin{equation} \label{eq:GrMi_standard_leap_frog}
{\mathbf z}_{n+1} - 2 \, {\mathbf z}_{n} + {\mathbf z}_{n-1} = \Delta t^2 \left ( {\mathbf R}_n - {\mathbf A} \, {\mathbf z}_n
\right ), \quad {\mathbf R}_n \simeq {\mathbf R}(t_n), \, {\mathbf z}_n \simeq {\mathbf z}(t_n) \,,
\end{equation}
which, however, would require $\Delta t$ to be comparable in size to the smallest elements in the mesh for numerical stability.

Following \cite{GrMi_DG09, GrMi_HLW02}, we
instead split the vectors ${\mathbf z}(t)$ and ${\mathbf R}(t)$ as
\begin{equation} \label{eq:GrMi_eq_split} \begin{split}
{\mathbf z}(t) & = ({\mathbf I} - {\mathbf P}) {\mathbf z}(t) + {\mathbf P} {\mathbf z}(t) =
{\mathbf z}^{[\mbox{\scriptsize{coarse}}]}(t) + {\mathbf z}^{[\mbox{\scriptsize fine}]}(t)\,, \\
{\mathbf R}(t) & = ({\mathbf I} - {\mathbf P}) {\mathbf R}(t) + {\mathbf P} {\mathbf R}(t) =
{\mathbf R}^{[\mbox{\scriptsize coarse}]}(t) + {\mathbf R}^{[\mbox{\scriptsize fine}]}(t)\,,
\end{split} \end{equation}
where the projection matrix $\mathbf P$ is diagonal. Its diagonal entries, equal to zero or one, identify the unknowns associated with
the locally refined region, where smaller time-steps are needed. To circumvent the severe CFL restriction on $\Delta t$ in the
leap-frog scheme, we need to treat ${\mathbf z}^{[\mbox{\scriptsize fine}]}(t)$ and ${\mathbf R}^{[\mbox{\scriptsize fine}]}(t)$
differently from ${\mathbf z}^{[\mbox{\scriptsize coarse}]}(t)$ and ${\mathbf R}^{[\mbox{\scriptsize coarse}]}(t)$ in
\begin{equation} \label{eq:GrMi_integral2} \begin{split}
& {\mathbf z}(t + \Delta t) - 2 \, {\mathbf z}(t) + {\mathbf z}(t - \Delta t) \\
& = \Delta t^2 \, \int_{-1}^{1} (1 - \vert \theta \vert)  \left \{ {\mathbf R}^{[\mbox{\scriptsize coarse}]}(t + \theta \, \Delta t)
+ {\mathbf R}^{[\mbox{\scriptsize fine}]} (t + \theta \, \Delta t) \right. \\
& \left. \qquad \qquad - {\mathbf A} \left ( {\mathbf z}^{[\mbox{\scriptsize coarse}]}(t + \theta \, \Delta t) +
{\mathbf z}^{[\mbox{\scriptsize fine}]}(t + \theta \, \Delta t) \right ) \right \} \, d \theta \,.
\end{split} \end{equation}
Since we wish to use the standard leap-frog scheme in the coarse part of the mesh, we approximate the terms in (\ref{eq:GrMi_integral2}) that involve
${\mathbf z}^{[\mbox{\scriptsize coarse}]}(t + \theta \, \Delta t)$ and
${\mathbf R}^{[\mbox{\scriptsize coarse}]}(t + \theta \, \Delta t)$ by their values at $t$, which yields
\begin{equation} \label{eq:GrMi_integral3} \begin{split}
{\mathbf z}(t + \Delta t) & - 2 \, {\mathbf z}(t) + {\mathbf z}(t - \Delta t) \simeq
\Delta t^2 \left \{ ({\mathbf I} - {\mathbf P}) {\mathbf R}(t) - {\mathbf A} ({\mathbf I} - {\mathbf P}) {\mathbf z}(t) \right \} \\
& + \Delta t^2 \, \int_{-1}^{1} (1 - \vert \theta \vert) \left \{
{\mathbf P} {\mathbf R}(t + \theta \Delta t) - {\mathbf A} {\mathbf P} {\mathbf z}(t + \theta \Delta t) \right \} \, d \theta \,.
\end{split} \end{equation}
Note that $\mathbf A$ and $\mathbf P$ do not commute.

Next for fixed $t$, let ${\widetilde {\mathbf z}}(\tau)$ solve the differential equation
\begin{equation} \label{eq:GrMi_diff_eq_z} \begin{split}
\frac{d^2 \widetilde{\mathbf z}}{d \tau^2}(\tau) & =
(\mathbf I - \mathbf P) {\mathbf R}(t) - {\mathbf A} (\mathbf I - \mathbf P) {\mathbf z}(t) +
{\mathbf P} {\mathbf R}(t + \tau) - {\mathbf A} {\mathbf P}  \widetilde{\mathbf z}(\tau) \,, \\
\widetilde{\mathbf z}(0) & = {\mathbf z}(t) \,, \ \widetilde{\mathbf z}'(0) = {\nu} \,,
\end{split} \end{equation}
where $\nu$ will be specified below. Again from (\ref{eq:GrMi_exact_formula}), we deduce that
\begin{equation} \label{eq:GrMi_integral4} \begin{split}
\widetilde {\mathbf z}(\Delta t) & - 2 \, \widetilde {\mathbf z}(0) + \widetilde {\mathbf z}(- \Delta t) =
\Delta t^2 \left \{ ({\mathbf I} - {\mathbf P}) {\mathbf R}(t) - {\mathbf A} ({\mathbf I} - {\mathbf P}) {\mathbf z}(t) \right \} \\
& + \Delta t^2 \, \int_{-1}^{1} (1 - \vert \theta \vert) \left \{
{\mathbf P} {\mathbf R}(t + \theta \Delta t) - {\mathbf A} {\mathbf P} \widetilde{\mathbf z}(\theta \Delta t) \right \}
 \, d \theta \,.
\end{split} \end{equation}
From the comparison of (\ref{eq:GrMi_integral3}) with (\ref{eq:GrMi_integral4}), we infer that
$$
{\mathbf z}(t + \Delta t) - 2 \, {\mathbf z}(t) + {\mathbf z}(t - \Delta t) \simeq
\widetilde {\mathbf z}(\Delta t) - 2 \, \widetilde {\mathbf z}(0) + \widetilde {\mathbf z}(- \Delta t) \,,
$$
or equivalently
\begin{equation} \label{eq:GrMi_approx_eq}
{\mathbf z}(t + \Delta t) + {\mathbf z}(t - \Delta t) \simeq 
\widetilde {\mathbf z}(\Delta t) + \widetilde {\mathbf z}(- \Delta t) \,.
\end{equation}
In fact from Taylor expansion and (\ref{eq:GrMi_semi_disc_model_sigma_0}), we obtain
\begin{equation*} \begin{split}
& \widetilde{\mathbf z}(\Delta t) + \widetilde {\mathbf z}(- \Delta t) = 2 \widetilde {\mathbf z}(0) +
\widetilde {\mathbf z}''(0) \Delta t^2 + {\mathcal O}(\Delta t^4) \\
& = 2 {\mathbf z}(t) + ({\mathbf R}(t) - {\mathbf A} {\mathbf z}(t)) \Delta t^2  + {\mathcal O}(\Delta t^4)
= {\mathbf z}(t + \Delta t) + {\mathbf z}(t - \Delta t) + {\mathcal O}(\Delta t^4) \,.
\end{split}
\end{equation*}
Thus to advance ${\mathbf z}(t)$ from $t$ to $t + \Delta t$, we shall evaluate $\widetilde{\mathbf z}(\Delta t) + \widetilde {\mathbf z}(- \Delta t)$
by solving (\ref{eq:GrMi_diff_eq_z}) numerically.

To take advantage of the inherent symmetry in time and thereby reduce the computational effort even further, we now let
$$
\mathbf q(\tau) = \widetilde{\mathbf z}(\tau) + \widetilde {\mathbf z}(- \tau) \,.
$$
Then, $q(\tau)$  solves the differential equation
\begin{equation} \label{eq:GrMi_diff_eq_q} \begin{split}
\frac{d^2 {\mathbf q}}{d \tau^2}(\tau) & = 2 \left \{
(\mathbf I - \mathbf P) {\mathbf R}(t) - {\mathbf A} (\mathbf I - \mathbf P) {\mathbf z}(t) \right \}
+ {\mathbf P} \left \{ {\mathbf R}(t + \tau) + {\mathbf R}(t - \tau) \right \} \\
& \quad - {\mathbf A} {\mathbf P} {\mathbf q}(\tau) \,, \\
{\mathbf q}(0) & = 2 {\mathbf z}(t) \,, \ {\mathbf q}'(0) = 0 \,,
\end{split} \end{equation}
with
\begin{equation} \label{eq:GrMi_approx_eq_q}
{\mathbf z}(t + \Delta t) + {\mathbf z}(t - \Delta t) = {\mathbf q}(\Delta t) + {\mathcal O}(\Delta t^4)\,.
\end{equation}
Note that ${\mathbf q}(\Delta t)$ does not depend on the value of $\nu$. Now, we shall approximate the right side of (\ref{eq:GrMi_integral}) by solving
(\ref{eq:GrMi_diff_eq_q}) on $[0, \Delta t]$, and then use (\ref{eq:GrMi_approx_eq_q}) to compute ${\mathbf z}(t + \Delta t)$. Thus, we need the numerical value of
$\mathbf q(\tau)$ only at $\Delta t$.

In summary, the second-order LTS algorithm for the solution of (\ref{eq:GrMi_semi_disc_model_sigma_0}) computes
${\mathbf z}_{n+1} \simeq \mathbf z(t + \Delta t)$, given $\mathbf z_n$ and $\mathbf z_{n-1}$, as follows:

\bigskip

\noindent {\bf LTS-LF2($p$) Algorithm}{\it
\begin{enumerate}
\item Set ${\mathbf w} := ({\mathbf I} - {\mathbf P}){\mathbf R}_n - {\mathbf A}({\mathbf I} - {\mathbf P}) {\mathbf z}_n$ and
${\mathbf q}_0 := 2 {\mathbf z}_n$.
\item Compute
$
{\mathbf q}_{1/p} := {\mathbf q}_0 + \displaystyle \frac{1}{2} \left ( \frac{\Delta t}{p} \right )^2
\left ( 2 \mathbf w + 2 {\mathbf P} {\mathbf R}_{n, 0} - {\mathbf A} {\mathbf P} {\mathbf q}_0 \right ) \,.
$
\item For $m = 1, \dots, p-1$, compute
{\small \begin{equation*}
{\mathbf q}_{(m+1)/p} := 2 {\mathbf q}_{m/p} - {\mathbf q}_{(m-1)/p} + \left ( \frac{\Delta t}{p} \right )^2 \left ( 2 \mathbf w +
{\mathbf P} ({\mathbf R}_{n, m} + {\mathbf R}_{n, -m}) - {\mathbf A} {\mathbf P} {\mathbf q}_{m/p} \right ) \,.
\end{equation*}}
\item Compute ${\mathbf z}_{n+1} := -{\mathbf z}_{n-1} + \mathbf q_1$.
\end{enumerate}}

Here, we have used the notations ${\mathbf R}_{n, m} \simeq {\mathbf R}(t_n + \tau_m)$ and 
${\mathbf R}_{n, -m} \simeq {\mathbf R}(t_n - \tau_m)$, where $t_n = n \Delta t$ and $\tau_m = m \Delta \tau$; note that
${\mathbf R}_{n, 0} \simeq {\mathbf R}(t_n + \tau_0) = {\mathbf R}(t_n) \simeq {\mathbf R}_n$. Steps 1-3 correspond to
the numerical solution of (\ref{eq:GrMi_diff_eq_q}) until $\tau = \Delta t$ with the leap-frog scheme, using the local time-step
$\Delta \tau = \Delta t /p$. For $\mathbf P = \mathbf 0$, that is without any local time-stepping, we recover the standard
leap-frog scheme. If the fraction of nonzero entries in $\mathbf P$ is small, the overall cost is dominated by the computation of
$\mathbf w$, which requires one multiplication by ${\mathbf A}({\mathbf I} - {\mathbf P})$ per time-step $\Delta t$. All further matrix-vector
multiplications by ${\mathbf A} {\mathbf P}$ only affect those unknowns that lie inside the refined region, or
immediately next to it.

\begin{proposition}
For ${\mathbf R}(t) \in C^2([0, T])$, the local time-stepping method LTS-LF2($p$) is second-order accurate.
\end{proposition}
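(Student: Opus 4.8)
\emph{Proof plan.} I would view LTS-LF2$(p)$ as a two-step (St\"ormer-type) scheme for $\mathbf{z}$ and prove that its consistency (local truncation) error is $\mathcal{O}(\Delta t^4)$; second-order convergence on $[0,T]$ then follows, under the usual CFL conditions, from the standard discrete-energy / Gronwall argument for such schemes, exactly as for the plain leap-frog method (cf.\ \cite{GrMi_DG09}). Eliminating the auxiliary quantities, one outer step reads $\mathbf{z}_{n+1}-2\mathbf{z}_n+\mathbf{z}_{n-1}=\mathbf{q}_p-2\mathbf{z}_n$, where $\mathbf{q}_p$ is the output of Steps~1--3. Since the procedure is linear in $\mathbf{z}$, the increment $\mathbf{z}_n\mapsto\mathbf{q}_p-2\mathbf{z}_n$ is affine, and the analysis below will show that it is an $\mathcal{O}(\Delta t^4)$ perturbation of the plain leap-frog increment: applied to the exact solution it gives $\mathbf{q}_p-2\mathbf{z}(t_n)=\Delta t^2\bigl(\mathbf{R}(t_n)-\mathbf{A}\mathbf{z}(t_n)\bigr)+\mathcal{O}(\Delta t^4)$, and its linear part differs from $-\Delta t^2\mathbf{A}$ by $\mathcal{O}(\Delta t^4)$. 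Hence the error $\mathbf{e}_n=\mathbf{z}_n-\mathbf{z}(t_n)$ obeys the familiar recursion $\mathbf{e}_{n+1}-2\mathbf{e}_n+\mathbf{e}_{n-1}+\Delta t^2\mathbf{A}\mathbf{e}_n=\mathcal{O}(\Delta t^4)(1+\|\mathbf{e}_n\|)$, and everything reduces to establishing the $\mathcal{O}(\Delta t^4)$ consistency bound.

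I would split the consistency error into (a) the ``modified-equation'' part $[\mathbf{z}(t_{n+1})-2\mathbf{z}(t_n)+\mathbf{z}(t_{n-1})]-[\mathbf{q}(\Delta t)-2\mathbf{z}(t_n)]$, where $\mathbf{q}$ solves the inner problem (\ref{eq:GrMi_diff_eq_q}) with data taken from the exact solution at $t=t_n$, and (b) the inner discretization error $\mathbf{q}(\Delta t)-\mathbf{q}_p$. Part (a) is exactly $\mathcal{O}(\Delta t^4)$ by (\ref{eq:GrMi_approx_eq_q}), already derived in the text from the Taylor expansion of $\widetilde{\mathbf{z}}(\pm\Delta t)$ in (\ref{eq:GrMi_diff_eq_z}) together with (\ref{eq:GrMi_semi_disc_model_sigma_0}); the only addition needed is uniformity in $n$: since $\mathbf{R}\in C^2([0,T])$, the right-hand side of (\ref{eq:GrMi_diff_eq_q}) is of class $C^2$ in $\tau$, so $\widetilde{\mathbf{z}}$, and hence $\mathbf{q}$, are of class $C^4$ with $\|\mathbf{q}^{(4)}\|$ bounded uniformly for $t_n\in[0,T]$ and $\tau\in[0,\Delta t]$.

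For part (b) --- where $\mathbf{q}_p$ results from the leap-frog sweep of Steps~1--3 with $p$ substeps of size $\Delta\tau=\Delta t/p$ on $[0,\Delta t]$ --- two observations are crucial. First, the forcing $\mathbf{R}(t_n+\tau)+\mathbf{R}(t_n-\tau)$ in (\ref{eq:GrMi_diff_eq_q}) is \emph{even} in $\tau$, hence so is $\mathbf{q}$; in particular $\mathbf{q}'(0)=0$ (an initial condition) and $\mathbf{q}'''(0)=0$, so the nonstandard startup $\mathbf{q}_{1/p}=\mathbf{q}(0)+\frac{1}{2}\Delta\tau^2\mathbf{q}''(0)$ in Step~2 is accurate to $\mathcal{O}(\Delta\tau^4)$, not merely to the generic $\mathcal{O}(\Delta\tau^3)$. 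Second, the symmetric three-point stencil of Step~3 has local residual $\frac{1}{12}\Delta\tau^4\mathbf{q}^{(4)}+\mathcal{O}(\Delta\tau^6)$ at each of the $p-1$ interior substeps. Writing $\mathbf{e}_m=\mathbf{q}_{m/p}-\mathbf{q}(\tau_m)$, these yield $\mathbf{e}_{m+1}-2\mathbf{e}_m+\mathbf{e}_{m-1}+\Delta\tau^2\mathbf{A}\mathbf{P}\,\mathbf{e}_m=\mathbf{r}_m$ with $\mathbf{e}_0=0$, $\|\mathbf{e}_1\|=\mathcal{O}(\Delta\tau^4)$, $\|\mathbf{r}_m\|=\mathcal{O}(\Delta\tau^4)$. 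Decomposing $\mathbf{e}_m$ into its $\mathbf{P}$-component --- which satisfies a closed symmetric leap-frog recursion with the positive semi-definite operator $\mathbf{P}\mathbf{A}\mathbf{P}$ --- and its $(\mathbf{I}-\mathbf{P})$-component --- a pure double summation of a source controlled by the first --- and using that under the inner CFL condition the corresponding discrete Green's function $G_k$ satisfies $|G_k|\le k$, the $p-1$ residuals accumulate into $\|\mathbf{e}_p\|=\mathcal{O}(p^2\Delta\tau^4)$. Since $p\,\Delta\tau=\Delta t$, this is $\mathcal{O}(\Delta t^2\Delta\tau^2)=\mathcal{O}(\Delta t^4)$; hence the consistency error is $\mathcal{O}(\Delta t^4)$ and the conclusion follows.

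The step I expect to require the most care is part (b). On the one hand, one must check that $\mathbf{q}$ is even in $\tau$, so that the nonstandard startup of Step~2 is genuinely $\mathcal{O}(\Delta\tau^4)$-accurate; were it only $\mathcal{O}(\Delta\tau^3)$, the accumulated inner error would be $\mathcal{O}(p\,\Delta\tau^3)=\mathcal{O}(\Delta t^3)$, one order too large. On the other hand, one must track the powers of $\Delta t$ carefully through the two nested second-difference operators (outer with step $\Delta t$, inner with step $\Delta\tau=\Delta t/p$): the mechanism is that the double summation intrinsic to a second-order recursion turns $p-1$ residuals of size $\mathcal{O}(\Delta\tau^4)$ into an error $\mathcal{O}(p^2\Delta\tau^4)$, which is $\mathcal{O}(\Delta t^4)$ precisely because $p\,\Delta\tau=\Delta t$. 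The uniform-in-$p$ stability this count needs reduces, via the splitting into $\mathbf{P}$- and $(\mathbf{I}-\mathbf{P})$-components, to the standard leap-frog stability of the symmetric operator $\mathbf{P}\mathbf{A}\mathbf{P}$, and is routine once the orders are in place.
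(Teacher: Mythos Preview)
The paper does not actually prove this proposition here; it simply refers to \cite{GrMi_GM10}. Your proposal supplies the natural argument: split the local truncation error into the outer approximation $[\mathbf{z}(t_{n+1})+\mathbf{z}(t_{n-1})]-\mathbf{q}(\Delta t)$, which is $\mathcal{O}(\Delta t^4)$ by (\ref{eq:GrMi_approx_eq_q}) already derived in the text, and the inner leap-frog error $\mathbf{q}(\Delta t)-\mathbf{q}_p$. Your treatment of the inner error is correct and identifies the two delicate points precisely: the evenness of $\mathbf{q}$ in $\tau$ (hence $\mathbf{q}'''(0)=0$), which upgrades the Step~2 startup from $\mathcal{O}(\Delta\tau^3)$ to $\mathcal{O}(\Delta\tau^4)$, and the $\mathbf{P}/(\mathbf{I}-\mathbf{P})$ splitting that reduces stability of the non-symmetric recursion with $\mathbf{A}\mathbf{P}$ to the standard leap-frog bound for the symmetric operator $\mathbf{P}\mathbf{A}\mathbf{P}$, yielding the $\mathcal{O}(p^2\Delta\tau^4)=\mathcal{O}(\Delta t^4)$ accumulation. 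This is essentially how the proof in \cite{GrMi_GM10} (and, for the source-free case, \cite{GrMi_DG09}) proceeds, so your proposal matches the intended argument; the only thing you might state explicitly is that the inner CFL hypothesis $\frac{\Delta\tau^2}{4}\lambda_{\max}(\mathbf{P}\mathbf{A}\mathbf{P})<1$ needed for the Green-function bound is implied by the global stability condition on $\mathbf{A}_p$ assumed for the scheme.
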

\begin{proof} See \cite{GrMi_GM10}.\end{proof}

To establish the stability of the LTS-LF2($p$) scheme we consider the homogeneous case, ${\mathbf R}_{n} = {\mathbf 0}$.
Then, the standard leap-frog scheme (\ref{eq:GrMi_standard_leap_frog}) conserves the
discrete energy
{\small \begin{equation} \label{eq:GrMi_disc_energy_LF}
E^{n + \frac{1}{2}} = \frac{1}{2} \left [ \left < \left ( {\mathbf I} - \frac{\Delta t^2}{4}
{\mathbf A} \right ) \frac{{\mathbf z}_{n+1} - {\mathbf z}_n }{\Delta t}, \frac{{\mathbf z}_{n+1} - {\mathbf z}_n}{\Delta t} \right > +
\left < {\mathbf A} \frac{{\mathbf z}_{n+1} + {\mathbf z}_n}{2}, \frac{{\mathbf z}_{n+1} + {\mathbf z}_n}{2} \right > \right ] \,.
\end{equation}}
Here $E^{n + \frac{1}{2}} \simeq E(t_{n+\frac{1}{2}})$ and the angular brackets denote the standard Euclidean inner product.
Since $\mathbf A$ is symmetric, the quadratic form in (\ref{eq:GrMi_disc_energy_LF}) is also symmetric. For sufficiently small $\Delta t$ it is
also positive semidefinite and hence yields a true energy.

To derive a necessary and sufficient condition for the numerical stability of the LTS-LF2($p$) scheme, we exhibit a conserved discrete energy for the
LTS-LF2($p$) algorithm with ${\mathbf R}_n = {\mathbf 0}$.  Following \cite{GrMi_DG09}, we first rewrite the LTS-LF2($p$) scheme in ``leap-frog manner''.
\begin{proposition}
The local time-stepping scheme LTS-LF2($p$) with ${\mathbf R}_{n,m} = {\mathbf 0}$ is equivalent to
$$
{\mathbf z}_{n+1} = 2 \mathbf{z}_n - {\mathbf z}_{n-1} - \Delta t^2 {\mathbf A}_p {\mathbf z}_n\,,
$$
where ${\mathbf A}_p$ is defined by
\begin{equation} \label{eq:GrMi_matrix_Ap}
{\mathbf A}_p = {\mathbf A} - \frac{2}{p^2} \sum_{j = 1}^{p-1} \left ( \frac{\Delta t}{p} \right )^{2j} \alpha_j^p
({\mathbf A} {\mathbf P})^j {\mathbf A}
\end{equation}
and the constants $\alpha_j^p$ are given by
\begin{equation*} \begin{split}
\alpha_1^2 & = 1, \quad \alpha_1^3 = 6, \quad \alpha_2^3 = -1, \\
\alpha_1^{p+1} & = m^2 + 2 \alpha_1^p - \alpha_1^{p-1}, \\
\alpha_j^{p+1} & = 2 \alpha_j^p - \alpha_j^{p-1} - \alpha_{j-1}^p, \quad j = 2, \dots, p-2, \\
\alpha_{p-1}^{p+1} & = 2 \alpha_{p-1}^{p} - \alpha_{p-2}^p, \\
\alpha_p^{p+1} & = - \alpha_{p-1}^p\,.
\end{split} \end{equation*}
Furthermore, the matrix ${\mathbf A}_p$ is symmetric.
\end{proposition}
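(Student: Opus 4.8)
The plan is to run the LTS-LF2($p$) algorithm symbolically in the homogeneous case, eliminate the internal vectors $\mathbf{q}_{m/p}$, and show that its four steps collapse into a single three-term leap-frog recursion with $\mathbf{A}$ replaced by the matrix $\mathbf{A}_p$ of (\ref{eq:GrMi_matrix_Ap}). With $\mathbf{R}_{n,m}=\mathbf{0}$, Step~1 gives $\mathbf{w}=-\mathbf{A}(\mathbf{I}-\mathbf{P})\mathbf{z}_n$ and $\mathbf{q}_0=2\mathbf{z}_n$, and every operation in Steps~2--3 is linear in $\mathbf{z}_n$; hence $\mathbf{q}_{m/p}=\mathbf{B}_m\mathbf{z}_n$ for matrices $\mathbf{B}_m$ built from $\mathbf{A}$, $\mathbf{P}$ and $h:=\Delta t/p$ only. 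Inserting $\mathbf{q}_0=2\mathbf{z}_n$ into Step~2 and using $\mathbf{A}(\mathbf{I}-\mathbf{P})+\mathbf{A}\mathbf{P}=\mathbf{A}$, the two penalty contributions cancel, so that $\mathbf{B}_0=2\mathbf{I}$ and $\mathbf{B}_1=2\mathbf{I}-h^2\mathbf{A}$ --- exactly the standard leap-frog starter --- while Step~3 yields, for $m=1,\dots,p-1$,
\[
\mathbf{B}_{m+1}=2\mathbf{B}_m-\mathbf{B}_{m-1}-2h^2\,\mathbf{A}(\mathbf{I}-\mathbf{P})-h^2\,\mathbf{A}\mathbf{P}\,\mathbf{B}_m .
\]
Since Step~4 reads $\mathbf{z}_{n+1}=-\mathbf{z}_{n-1}+\mathbf{B}_p\,\mathbf{z}_n$, it then suffices to prove $\mathbf{B}_p=2\mathbf{I}-\Delta t^2\mathbf{A}_p$.

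Next I would pass to the shifted matrices $\mathbf{C}_m:=\mathbf{I}-\frac{1}{2}\mathbf{B}_m$. The operator $-h^2\mathbf{A}\mathbf{P}$ hitting the ``$\mathbf{I}$-part'' of $\mathbf{B}_m$ recombines with the inhomogeneous term $-2h^2\mathbf{A}(\mathbf{I}-\mathbf{P})$ into a single projector-free term $+h^2\mathbf{A}$, so that $\mathbf{C}_0=\mathbf{0}$, $\mathbf{C}_1=\frac{1}{2}h^2\mathbf{A}$, and
\[
\mathbf{C}_{m+1}=2\mathbf{C}_m-\mathbf{C}_{m-1}+h^2\mathbf{A}-h^2\,\mathbf{A}\mathbf{P}\,\mathbf{C}_m .
\]
An induction on $m$ then shows $\mathbf{C}_m=\sum_{j=0}^{m-1}c_j^m\,(\mathbf{A}\mathbf{P})^j\mathbf{A}$ for scalars $c_j^m$ satisfying $c_j^{m+1}=2c_j^m-c_j^{m-1}+\delta_{j0}\,h^2-h^2\,c_{j-1}^m$, with the convention $c_j^m=0$ for $j<0$ or $j\ge m$. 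Solving the $j=0$ relation ($c_0^0=0$, $c_0^1=h^2/2$) gives the closed form $c_0^m=h^2m^2/2$, hence $c_0^p=\Delta t^2/2$, so that
\[
\mathbf{B}_p=2\mathbf{I}-2\mathbf{C}_p=2\mathbf{I}-\Delta t^2\mathbf{A}-2\sum_{j=1}^{p-1}c_j^p\,(\mathbf{A}\mathbf{P})^j\mathbf{A} .
\]
Matching this term by term against (\ref{eq:GrMi_matrix_Ap}) identifies each $\alpha_j^p$ with $c_j^p$ up to the normalization in that formula, and feeding that identification into the scalar recursion for $c_j^m$ reproduces exactly the stated recursion for the $\alpha_j^m$; the exceptional forms at $j=p-1$ and $j=p$ are just the generic recursion with the terms that vanish by the convention $c_j^m=0$ ($j\ge m$) dropped, while the base values $\alpha_1^2=1$, $\alpha_1^3=6$, $\alpha_2^3=-1$ come from evaluating $c_1^2,c_1^3,c_2^3$ directly from the $\mathbf{C}$-recursion.

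Finally, symmetry of $\mathbf{A}_p$ is immediate: $\mathbf{A}$ is symmetric and $\mathbf{P}$ is a diagonal, hence symmetric, projector, so $\left((\mathbf{A}\mathbf{P})^j\mathbf{A}\right)^{\top}=\mathbf{A}(\mathbf{P}\mathbf{A})^j=(\mathbf{A}\mathbf{P})^j\mathbf{A}$, and $\mathbf{A}_p$ is a real linear combination of such matrices. The one place that needs care is the coefficient bookkeeping in the second step: keeping the truncation $c_j^m=0$ for $j\ge m$ straight is precisely what produces, and explains, the special cases $j=p-1,p$ in the definition of the $\alpha_j^p$. Note also that no linear independence of the matrices $(\mathbf{A}\mathbf{P})^j\mathbf{A}$ is required, since we only verify that the explicit coefficients $c_j^p$ (equivalently $\alpha_j^p$) yield the correct matrix $\mathbf{B}_p$, not that such a representation is unique.
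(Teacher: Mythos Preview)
Your approach is correct and is essentially the one used in the cited references: the paper itself gives no proof here, merely pointing to \cite{GrMi_DG09} and \cite{GrMi_GM10}, where the argument proceeds exactly as you outline --- run the homogeneous algorithm symbolically, observe that each $\mathbf q_{m/p}$ depends linearly on $\mathbf z_n$, and derive a three-term recursion for the corresponding matrices that collapses the scheme into leap-frog form with $\mathbf A$ replaced by $\mathbf A_p$.

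One remark: you leave the identification ``$\alpha_j^p$ equals $c_j^p$ up to the normalization'' implicit. It is worth carrying this out explicitly. With $h=\Delta t/p$ one finds $\alpha_j^m = -c_j^m/h^{2(j+1)}$, and substituting this into your scalar recursion $c_j^{m+1}=2c_j^m-c_j^{m-1}+\delta_{j0}h^2-h^2c_{j-1}^m$ immediately gives the generic rule $\alpha_j^{m+1}=2\alpha_j^m-\alpha_j^{m-1}-\alpha_{j-1}^m$ for $j\ge 2$, while for $j=1$ the term $c_0^m=h^2m^2/2$ produces the inhomogeneous $m^2$-contribution (the ``$m^2$'' in the statement is evidently a typo for $p^2$). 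The special forms at $j=p-1$ and $j=p$ are, as you say, just the generic recursion with the conventionally zero terms omitted. Spelling out this normalization also lets you verify the base values $\alpha_1^2,\alpha_1^3,\alpha_2^3$ by direct computation of $\mathbf C_2,\mathbf C_3$, which makes the proof self-contained rather than leaving the reader to check that the two coefficient families really coincide.
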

\begin{proof} See \cite{GrMi_DG09} and \cite{GrMi_GM10}.\end{proof}

\begin{proposition}
The local time-stepping scheme LTS-LF2($p$) with ${\mathbf R}_n = {\mathbf 0}$ conserves the energy
{\small \begin{equation}\label{eq:GrMi_disc_energy_LT}
E^{n + \frac{1}{2}} = \frac{1}{2} \left [ \left < \left ( {\mathbf I} - \frac{\Delta t^2}{4}
{\mathbf A}_p \right ) \frac{{\mathbf z}_{n+1} - {\mathbf z}_n }{\Delta t}, \frac{{\mathbf z}_{n+1} - {\mathbf z}_n}{\Delta t} \right > +
\left < {\mathbf A}_p \frac{{\mathbf z}_{n+1} + {\mathbf z}_n}{2}, \frac{{\mathbf z}_{n+1} + {\mathbf z}_n}{2} \right > \right ] \,.
\end{equation}}
\end{proposition}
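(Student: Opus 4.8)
The plan is to reduce the statement to the classical leap-frog energy identity. By the previous proposition, the LTS-LF2($p$) iteration with $\mathbf{R}_{n,m} = \mathbf{0}$ is identical to the three-term recursion
\begin{equation*}
\mathbf{z}_{n+1} - 2\, \mathbf{z}_n + \mathbf{z}_{n-1} = -\Delta t^2\, \mathbf{A}_p\, \mathbf{z}_n \,,
\end{equation*}
in which $\mathbf{A}_p$ is symmetric. It therefore suffices to show that \emph{any} recursion of this form, with a symmetric matrix in the role of $\mathbf{A}_p$, conserves the quadratic quantity $E^{n+\frac{1}{2}}$ defined in (\ref{eq:GrMi_disc_energy_LT}); this is exactly the standard energy conservation of the leap-frog scheme for $\ddot{\mathbf{z}} + \mathbf{A}_p \mathbf{z} = \mathbf{0}$, now with $\mathbf{A}_p$ replacing $\mathbf{A}$ in (\ref{eq:GrMi_disc_energy_LF}).

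Concretely, I would introduce $\mathbf{v}^{n+\frac{1}{2}} := (\mathbf{z}_{n+1} - \mathbf{z}_n)/\Delta t$ and $\mathbf{m}^{n+\frac{1}{2}} := (\mathbf{z}_{n+1} + \mathbf{z}_n)/2$, so that $E^{n+\frac{1}{2}}$ is the sum of the two quadratic forms $\frac{1}{2}\langle (\mathbf{I} - \frac{\Delta t^2}{4}\mathbf{A}_p)\mathbf{v}^{n+\frac{1}{2}}, \mathbf{v}^{n+\frac{1}{2}}\rangle$ and $\frac{1}{2}\langle \mathbf{A}_p \mathbf{m}^{n+\frac{1}{2}}, \mathbf{m}^{n+\frac{1}{2}}\rangle$. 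I would then form the telescoping difference $E^{n+\frac{1}{2}} - E^{n-\frac{1}{2}}$; since both $\mathbf{I} - \frac{\Delta t^2}{4}\mathbf{A}_p$ and $\mathbf{A}_p$ are symmetric, each of the two differences of quadratic forms can be written as $\frac{1}{2}\langle \mathbf{B}(\mathbf{x} - \mathbf{y}), \mathbf{x} + \mathbf{y}\rangle$ with $\mathbf{B}$ the relevant symmetric matrix. The next step is to substitute the discrete identities
\begin{equation*}
\mathbf{v}^{n+\frac{1}{2}} - \mathbf{v}^{n-\frac{1}{2}} = \frac{\mathbf{z}_{n+1} - 2\mathbf{z}_n + \mathbf{z}_{n-1}}{\Delta t} = -\Delta t\, \mathbf{A}_p\, \mathbf{z}_n \,, \qquad \mathbf{m}^{n+\frac{1}{2}} - \mathbf{m}^{n-\frac{1}{2}} = \frac{\mathbf{z}_{n+1} - \mathbf{z}_{n-1}}{2} \,,
\end{equation*}
together with $\mathbf{v}^{n+\frac{1}{2}} + \mathbf{v}^{n-\frac{1}{2}} = (\mathbf{z}_{n+1} - \mathbf{z}_{n-1})/\Delta t$ and, again using the recursion, $\mathbf{m}^{n+\frac{1}{2}} + \mathbf{m}^{n-\frac{1}{2}} = (\mathbf{z}_{n+1} + 2\mathbf{z}_n + \mathbf{z}_{n-1})/2 = 2\mathbf{z}_n - \frac{\Delta t^2}{2}\mathbf{A}_p \mathbf{z}_n$. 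Expanding, the contribution of the first quadratic form becomes $-\frac{1}{2}\langle \mathbf{A}_p \mathbf{z}_n, \mathbf{z}_{n+1} - \mathbf{z}_{n-1}\rangle + \frac{\Delta t^2}{8}\langle \mathbf{A}_p^2 \mathbf{z}_n, \mathbf{z}_{n+1} - \mathbf{z}_{n-1}\rangle$, while the contribution of the second turns out to be exactly its negative (here the symmetry of $\mathbf{A}_p$ is used twice, to move $\mathbf{A}_p$ onto $\mathbf{z}_n$ inside each inner product); the two cancel, so $E^{n+\frac{1}{2}} = E^{n-\frac{1}{2}}$.

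The computation is purely algebraic, and the only genuinely structural input is the preceding proposition, which furnishes both the leap-frog reformulation with the matrix $\mathbf{A}_p$ and, crucially, the symmetry of $\mathbf{A}_p$ — the property that legitimises moving $\mathbf{A}_p$ and $\mathbf{I} - \frac{\Delta t^2}{4}\mathbf{A}_p$ across the Euclidean inner product in the cancellation above. Accordingly I do not anticipate a real obstacle; the only points requiring care are the bookkeeping of the factors $\frac{1}{2}$ and $\frac{1}{4}$ when the two quadratic forms are expanded, and the observation that no positivity of $\mathbf{A}_p$ is needed for conservation — positivity of the quadratic form in (\ref{eq:GrMi_disc_energy_LT}), for $\Delta t$ sufficiently small, is the separate ingredient that turns $E^{n+\frac{1}{2}}$ into a genuine nonnegative energy and thereby yields stability.
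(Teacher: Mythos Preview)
Your proposal is correct and follows exactly the route the paper indicates: invoke the preceding proposition to rewrite LTS-LF2($p$) as a leap-frog recursion with the symmetric matrix $\mathbf{A}_p$, and then run the standard leap-frog energy identity (the paper merely says ``by symmetry of $\mathbf{A}_p$, this standard argument is similar to the proof of (\ref{eq:GrMi_disc_energy_LF})'' and refers to \cite{GrMi_DG09}). Your explicit telescoping computation is the fleshed-out version of that one-line reference, and the algebra checks out.
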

\begin{proof}
By symmetry of ${\mathbf A}_p$, this standard argument is similar the proof of (\ref{eq:GrMi_disc_energy_LF}); see also \cite{GrMi_DG09} for details.
\end{proof}
As a consequence, the LTS-LF2($p$) is stable if $0<(\Delta t^2 / 4)\lambda_{\max}({\mathbf A}_p) < 1$;  note 
that the matrix ${\mathbf A}_p$ itself also depends on $\Delta t$.

\subsection{Fourth-order method for undamped waves}
In the absence of damping, the wave equation corresponds to a separable Hamiltonian system. This fact explains the success of symplectic integrators,
such as the St\"ormer-Verlet or the leap-frog method, when combined with a symmetric discretization in space. Indeed the fully discrete numerical scheme
will then conserve (a discrete version of) the energy, too. Clearly, standard symplectic partitioned Runge-Kutta (Lobatto IIIA--IIIB pairs) or
composition methods \cite{GrMi_HLW02} can be used to achieve higher accuracy \cite{GrMi_RWR04}. Because the Hamiltonian here is separable,
those higher order versions will also remain explicit in time, like the St\"ormer-Verlet method. Since damped wave equations are linear,
we instead opt for the even more efficient modified equation (ME) approach \cite{GrMi_SB87} in this section, which leads to explicit LTS of arbitrarily high (even) order.

Following the ME approach, we replace ${\mathbf A}{\mathbf z}(t + \theta \, \Delta t)$ in (\ref{eq:GrMi_integral}) by its Taylor expansion
$$
{\mathbf A}{\mathbf z}(t + \theta \, \Delta t) = {\mathbf A} \left ({\mathbf z}(t) + \theta \, \Delta t \, {\mathbf z}'(t) +
 \frac{\theta^2 \, \Delta t^2}{2} \, {\mathbf z}''(t) +  \frac{\theta^3 \, \Delta t^3}{6} \, {\mathbf z}'''(t) \right ) +
{\mathcal O}(\Delta t^4) \,.
$$
Then, the integrals involving odd powers of $\theta$ vanish. Next, by using that ${\mathbf z}''(t) = {\mathbf R}(t) - {\mathbf A}{\mathbf z}(t)$ and
the Simpson quadrature rule for the term that involves ${\mathbf R}(t + \theta \, \Delta t)$, we obtain the fourth-order modified equation scheme.
\begin{equation} \begin{split} \label{eq:GrMi_mod_eq}
\frac{{\mathbf z}_{m+1} - 2{\mathbf z}_m + {\mathbf z}_{m-1} }{\Delta t^2} & =
{\mathbf R}_m - {\mathbf A}{\mathbf z}_m + \frac{\Delta t^2}{12} {\mathbf A}^2 {\mathbf z}_m - \frac{\Delta t^2}{12} {\mathbf A} {\mathbf R}_m  \\
& + \frac{1}{3} \left ( {\mathbf R}_{m-1/2} -2 {\mathbf R}_m + {\mathbf R}_{m+1/2} \right ) + {\mathcal O}(\Delta t^4) \,,
\end{split} \end{equation}
where ${\mathbf z}_m \simeq {\mathbf z}(t_m)$, ${\mathbf R}_m \simeq {\mathbf R}(t_m)$ and
${\mathbf R}_{m\pm1/2} \simeq {\mathbf R}(t_m \pm \Delta t/2)$. Clearly, integration schemes of arbitrary (even) order can be obtained by using
additional terms in the Taylor expansion. Since the maximal time-step allowed by the fourth-order ME method is about 70\% times {\it larger}
than that of the leap-frog scheme \cite{GrMi_CJRT01}, the additional work needed for the improved accuracy is quite small; hence, the ME method is extremely
efficient.

We now derive a fourth-order LTS method for (\ref{eq:GrMi_semi_disc_model_sigma_0}). Similarly to the derivation in Section \ref{sec:GrMi_LTS-LF2},
we split the vectors ${\mathbf z}(t)$ and ${\mathbf R}(t)$ in (\ref{eq:GrMi_integral2}) into a fine and a coarse part, and shall treat
${\mathbf z}^{[\mbox{\scriptsize fine}]}(t)$ and ${\mathbf R}^{[\mbox{\scriptsize fine}]}(t)$ differently from
${\mathbf z}^{[\mbox{\scriptsize coarse}]}(t)$ and ${\mathbf R}^{[\mbox{\scriptsize coarse}]}(t)$. We expand
${\mathbf z}^{[\mbox{\scriptsize coarse}]}(t + \theta \, \Delta t)$ in Taylor series as
\begin{equation*} \begin{split}
{\mathbf z}^{[\mbox{\scriptsize coarse}]}(t + \theta \, \Delta t) = {\mathbf z}^{[\mbox{\scriptsize coarse}]}(t)
& + \theta \, \Delta t \, \frac{d {\mathbf z}^{[\mbox{\scriptsize coarse}]}}{dt}(t)
+ \frac{\theta^2 \, \Delta t^2}{2} \, \frac{d^2 {\mathbf z}^{[\mbox{\scriptsize coarse}]}}{dt^2}(t) \\
& + \frac{\theta^3 \, \Delta t^3}{6} \, \frac{d^3 {\mathbf z}^{[\mbox{\scriptsize coarse}]}}{dt^3}(t) + {\mathcal O}(\Delta t^4)
\end{split} \end{equation*}
and insert it into (\ref{eq:GrMi_integral2}). In (\ref{eq:GrMi_integral2}), the integrals involving odd powers of $\theta$ vanish. By using
$$
\frac{d^2 {\mathbf z}^{[\mbox{\scriptsize coarse}]}}{dt^2}(t) = ({\mathbf I} - {\mathbf P}) \frac{d^2 {\mathbf z}}{dt^2}(t)
= ({\mathbf I} - {\mathbf P}) {\mathbf R}(t) - ({\mathbf I} - {\mathbf P}) {\mathbf A} {\mathbf z}(t)
$$
and the Simpson quadrature rule for the term in (\ref{eq:GrMi_integral2}) that involves
${\mathbf R}^{[\mbox{\scriptsize coarse}]}(t + \theta \Delta t)$, we find that
\begin{eqnarray} 
& & {\mathbf z}(t + \Delta t) - 2 \, {\mathbf z}(t) + {\mathbf z}(t - \Delta t) \nonumber \\
& & = \Delta t^2 \left \{ ({\mathbf I} - {\mathbf P}) {\mathbf R}(t) - {\mathbf A} ({\mathbf I} - {\mathbf P}) {\mathbf z}(t) \right \}
+ \frac{\Delta t^4}{12} {\mathbf A} ({\mathbf I} - {\mathbf P}) {\mathbf A} {\mathbf z}(t) \label{eq:GrMi_mod_int} \\
& & - \frac{\Delta t^4}{12} {\mathbf A} ({\mathbf I} - {\mathbf P}) {\mathbf R}(t)
+ \frac{\Delta t^2}{3} ({\mathbf I} - {\mathbf P})
\left \{ {\mathbf R}\left (t - \frac{\Delta t}{2} \right ) - 2 {\mathbf R}(t) + {\mathbf R}\left (t + \frac{\Delta t}{2} \right ) \right \} \nonumber \\
& & + \Delta t^2 \int_{-1}^1 (1 - \vert \theta \vert) \left \{{\mathbf R}^{[\mbox{\scriptsize fine}]}(t+\theta \Delta t)
- {\mathbf A} {\mathbf z}^{[\mbox{\scriptsize fine}]}(t+\theta \Delta t) \right \} \, d \theta \,. \nonumber 
\end{eqnarray}
Hence, if ${\mathbf P} = {\mathbf 0}$ we recover the standard ME scheme (\ref{eq:GrMi_mod_eq}).

Similarly to Section \ref{sec:GrMi_LTS-LF2}, we now approximate the right-hand side of (\ref{eq:GrMi_mod_int}) by solving the following differential equation for
${\widetilde {\mathbf z}}(\tau)$
\begin{equation*} \begin{split}
\frac{d^2 \widetilde{\mathbf z}}{d \tau^2}(\tau) & =
(\mathbf I - \mathbf P) {\mathbf R}(t) - {\mathbf A} (\mathbf I - \mathbf P) {\mathbf z}(t) \\
& + \frac{1}{3} ({\mathbf I} - {\mathbf P})
\left \{ {\mathbf R}\left (t - \frac{\Delta t}{2} \right ) - 2 {\mathbf R}(t) + {\mathbf R}\left (t + \frac{\Delta t}{2} \right ) \right \} \\
& + \frac{\tau^2}{2} {\mathbf A} ({\mathbf I} - {\mathbf P}) {\mathbf A} {\mathbf z}(t)
- \frac{\tau^2}{2} {\mathbf A} ({\mathbf I} - {\mathbf P}) {\mathbf R}(t)
+ {\mathbf P} {\mathbf R}(t + \tau) - {\mathbf A} {\mathbf P}  \widetilde{\mathbf z}(\tau) \,, \\
\widetilde{\mathbf z}(0) & = {\mathbf z}(t) \,, \ \widetilde{\mathbf z}'(0) = {\nu} \,,
\end{split} \end{equation*}
where $\nu$ will be specified below. Again, using Taylor expansions, we infer that
\begin{equation*}
{\mathbf z}(t + \Delta t) + {\mathbf z}(t - \Delta t) =
\widetilde{\mathbf z}(\Delta t) + \widetilde {\mathbf z}(- \Delta t) + {\mathcal O}(\Delta t^6) \,.
\end{equation*}
Again, the quantity $\widetilde{\mathbf z}(\Delta t) + \widetilde {\mathbf z}(- \Delta t)$ does not depend on the value of
$\nu$, which we set to zero. As in Section \ref{sec:GrMi_LTS-LF2}, we set $\mathbf q(\tau) := \widetilde{\mathbf z}(\tau) + \widetilde {\mathbf z}(- \tau)$, which solves
the differential equation
\begin{equation} \label{eq:GrMi_mod_diff_eq_q} \begin{split}
\frac{d^2 {\mathbf q}}{d \tau^2}(\tau) & = 2 \left \{
(\mathbf I - \mathbf P) {\mathbf R}(t) - {\mathbf A} (\mathbf I - \mathbf P) {\mathbf z}(t) \right \} \\
& + \frac{2}{3} ({\mathbf I} - {\mathbf P}) \left \{ {\mathbf R}\left (t - \frac{\Delta t}{2} \right ) - 2 {\mathbf R}(t) + {\mathbf R}\left (t + \frac{\Delta t}{2} \right ) \right \} \\
& + {\tau^2} {\mathbf A} ({\mathbf I} - {\mathbf P}) {\mathbf A} {\mathbf z}(t)
- {\tau^2} {\mathbf A} ({\mathbf I} - {\mathbf P}) {\mathbf R}(t) \\
& + {\mathbf P} \left \{ {\mathbf R}(t + \tau) + {\mathbf R}(t - \tau) \right \} \\
& - {\mathbf A} {\mathbf P} {\mathbf q}(\tau) \,, \\
{\mathbf q}(0) & = 2 {\mathbf z}(t) \,, \ {\mathbf q}'(0) = 0 \,.
\end{split} \end{equation}
Thus, we have
\begin{equation} \label{eq:GrMi_approx_eq_q_mod}
{\mathbf z}(t + \Delta t) + {\mathbf z}(t - \Delta t) = {\mathbf q}(\Delta t) + {\mathcal O}(\Delta t^6) \,.
\end{equation}
Now, we approximate the right side of (\ref{eq:GrMi_integral}) by solving (\ref{eq:GrMi_mod_diff_eq_q}) with the fourth-order ME method on $[0, \Delta t]$ with a
smaller time step $\Delta \tau = \Delta t / p$, and then use (\ref{eq:GrMi_approx_eq_q_mod}) to compute ${\mathbf z}(t + \Delta t)$.

In summary, the fourth-order LTS algorithm for the solution of (\ref{eq:GrMi_semi_disc_model_sigma_0}) computes
${\mathbf z}_{n+1}~\simeq~\mathbf z(t + \Delta t)$, given $\mathbf z_n$ and $\mathbf z_{n-1}$, as follows:

\bigskip

\noindent {\bf LTS-LFME4($p$) Algorithm}
{\it\begin{enumerate}
\item Set ${\mathbf q}_0 := 2 {\mathbf z}_n$,
$w_1 := ({\mathbf I} - {\mathbf P}){\mathbf R}_n - {\mathbf A}({\mathbf I} - {\mathbf P}) {\mathbf z}_n$,

$w_2 := {\mathbf A}({\mathbf I} - {\mathbf P}) {\mathbf A} {\mathbf z}_n - {\mathbf A}({\mathbf I} - {\mathbf P}) {\mathbf R}_n$ and
$r_1 := {\mathbf R}_{n-1/2} -2 {\mathbf R}_n + {\mathbf R}_{n+1/2}$.
\item Compute
\begin{equation*} \begin{split}
u & := 2w_1 + \frac{2}{3}({\mathbf I} - {\mathbf P})r_1 + 2 {\mathbf P}{\mathbf R}_{n, 0} - {\mathbf A}{\mathbf P}{\mathbf q}_0 \\
{\mathbf q}_{1/p} & := {\mathbf q}_0 + \frac{1}{2} \left ( \frac{\Delta t}{p} \right )^2 u
+ \frac{1}{24}  \left ( \frac{\Delta t}{p} \right )^4 \left ( 2w_2 + 2 \left ( \frac{2}{\Delta t} \right )^2 {\mathbf P} r_1 - {\mathbf A}{\mathbf P} u \right );
\end{split} \end{equation*}
\item For $m = 1, \dots, p-1$, compute
{\small \begin{equation*} \begin{split}
u_1 & := 2 w_1 + \frac{2}{3}({\mathbf I} - {\mathbf P})r_1 + \left ( \frac{m \Delta t}{p}\right )^2 w_2 +
{\mathbf P} \left ( {\mathbf R}_{n, m} + {\mathbf R}_{n, -m} \right ) - {\mathbf A}{\mathbf P}{\mathbf q}_{m/p}\,,\\
r & := {\mathbf R}_{n, m-1/2} - 2 {\mathbf R}_{n, m} + {\mathbf R}_{n, m+1/2} + {\mathbf R}_{n, -m-1/2} \\
  & \qquad \qquad \qquad - 2 {\mathbf R}_{n, -m} + {\mathbf R}_{n, -m+1/2} \,, \\
u_2 & := 2 w_2 + \left ( \frac{2p}{m\Delta t} \right )^2 {\mathbf P} r - {\mathbf A} {\mathbf P} u_1 \,,\\
{\mathbf q}_{(m+1)/p} & := 2 {\mathbf q}_{m/p} - {\mathbf q}_{(m-1)/p} + \left ( \frac{\Delta t}{p} \right )^2 u_1
+ \frac{1}{12} \left ( \frac{\Delta t}{p} \right )^4 u_2 \,.
\end{split} \end{equation*}}
\item Compute ${\mathbf z}_{n+1} := -{\mathbf z}_{n-1} + \mathbf q_1$.
\end{enumerate}
}
Here, Steps 1-3 correspond to the numerical solution of (\ref{eq:GrMi_mod_diff_eq_q}) until $\tau = \Delta t$ with the ME approach
using the local time-step $\Delta \tau = \Delta t /p$. The LTS-LFME4($p$) algorithm requires three -- two, without sources -- multiplications by
${\mathbf A}({\mathbf I} - {\mathbf P})$ and $2p$ further multiplications by ${\mathbf A}{\mathbf P}$.
For ${\mathbf P} = {\mathbf 0}$, that is without any local time-stepping, the algorithm reduces to the modified equation scheme (\ref{eq:GrMi_mod_eq}) above.

\subsection{Second-order leap-frog/Crank-Nicolson based method for damped waves}
We shall now derive a second-order LTS method for (\ref{eq:GrMi_semi_disc_model_final}) in a general form with $\mathbf D \not = \mathbf 0$.
In contrast to the time-stepping scheme presented in Section \ref{sec:GrMi_LTS-LF2} for the case $\mathbf D = \mathbf 0$, we are now faced with several difficulties
due to the additional ${\mathbf D} {\mathbf z}'(t)$ term. First, we shall treat that term implicitly to avoid any additional CFL restriction; else, the stability
condition will be more restrictive than that with the LTS-LF2($p$) scheme, depending on the magnitude of $\sigma$. Note that very large values of $\sigma$ will affect the CFL stability condition of any
explicit method regardless of the use of
local time-stepping. Nevertheless, the resulting scheme will be explicit,
since $\mathbf D$ is essentially a diagonal matrix. Second, we can no longer take advantage of any inherent symmetry in time of the solution.
Third, to avoid any loss of accuracy, we must carefully initialize the LTS scheme, which again is based on the highly efficient (two-step) leap-frog method.

The exact solution ${\mathbf z}(t)$ of (\ref{eq:GrMi_semi_disc_model_final}) satisfies
\begin{equation} \label{eq:GrMi_integral_approx} \begin{split}
& {\mathbf z}(t + \Delta t) - 2 \, {\mathbf z}(t)  + {\mathbf z}(t - \Delta t) + \frac{\Delta t}{2} {\mathbf D}
\left ({\mathbf z}(t + \Delta t) -  {\mathbf z}(t - \Delta t) \right ) \\
& = \Delta t^2 \, \int_{-1}^{1} (1 - \vert \theta \vert)
\left ( {\mathbf R}(t + \theta \, \Delta t) - {\mathbf A} \, {\mathbf z}(t + \theta \, \Delta t) \right ) \, d \theta 
+ {\mathcal O}(\Delta t^4) \,.
\end{split}\end{equation}
To derive  a second-order LTS method for (\ref{eq:GrMi_semi_disc_model_final}), we now split the vectors ${\mathbf z}(t)$ and ${\mathbf R}(t)$
as in (\ref{eq:GrMi_eq_split}) and approximate the integrands in (\ref{eq:GrMi_integral_approx}) as follows:
\begin{equation*} \begin{split}
{\mathbf R}^{[\mbox{\scriptsize coarse}]}(t + \theta \, \Delta t) + {\mathbf R}^{[\mbox{\scriptsize fine}]}
(t + \theta \, \Delta t) & \simeq {\mathbf R}^{[\mbox{\scriptsize coarse}]}(t) + {\mathbf P} {\mathbf R}(t + \theta \Delta t) \,, \\
{\mathbf A} \left ( {\mathbf z}^{[\mbox{\scriptsize coarse}]}(t + \theta \, \Delta t) + {\mathbf z}^{[\mbox{\scriptsize fine}]}(t + \theta \, \Delta t)
\right ) & \simeq {\mathbf A}{\mathbf z}^{[\mbox{\scriptsize coarse}]}(t) + {\mathbf A} {\mathbf P} {\mathbf z} (\theta \Delta t)\,.
\end{split} \end{equation*}
We thus have
\begin{equation} \label{eq:GrMi_integral3_sigma} \begin{split}
{\mathbf z}(t + \Delta t) & - 2 \, {\mathbf z}(t) + {\mathbf z}(t - \Delta t) + \frac{\Delta t}{2} {\mathbf D} 
\left ({\mathbf z}(t + \Delta t) -  {\mathbf z}(t - \Delta t) \right ) \\
& \simeq \Delta t^2 \left \{
({\mathbf I} - {\mathbf P}) {\mathbf R}(t) - {\mathbf A} ({\mathbf I} - {\mathbf P}) {\mathbf z}(t)\right \} \\
& + \Delta t^2 \, \int_{-1}^{1} (1 - \vert \theta \vert) 
\left \{ {\mathbf P} {\mathbf R}(t + \theta \Delta t) - {\mathbf A} {\mathbf P} {\mathbf z}(\theta \Delta t) \right \}
 \, d \theta \,.
\end{split} \end{equation}
Next for fixed $t$, let ${\widetilde {\mathbf z}}(\tau)$ solve the differential equation
\begin{equation} \label{eq:GrMi_diff_eq_z_tilde} \begin{split}
\frac{d^2 \widetilde{\mathbf z}}{d \tau^2}(\tau) + {\mathbf D} \frac{d \widetilde{\mathbf z}}{d \tau}(\tau)& =
(\mathbf I - \mathbf P) {\mathbf R}(t) - {\mathbf A} (\mathbf I - \mathbf P) {\mathbf z}(t) +
{\mathbf P} {\mathbf R}(t + \tau) \\ & - {\mathbf A} {\mathbf P}  \widetilde{\mathbf z}(\tau) \,, \\
\widetilde{\mathbf z}(0) & = {\mathbf z}(t) \,, \ \widetilde{\mathbf z}'(0) = {\nu} \,,
\end{split} \end{equation}
where $\nu$ will be specified below. Since the exact solution $\widetilde{\mathbf z}(t)$ of (\ref{eq:GrMi_diff_eq_z_tilde}) satisfies
\begin{equation} \label{eq:GrMi_integral4_sigma} \begin{split}
\widetilde {\mathbf z}(\Delta t) & - 2 \, \widetilde {\mathbf z}(0) + \widetilde {\mathbf z}(- \Delta t) +
\frac{\Delta t}{2} {\mathbf D} \left (  \widetilde {\mathbf z}(\Delta t) - \widetilde {\mathbf z}(- \Delta t) \right ) \\
& = \Delta t^2 \left \{ ({\mathbf I} - {\mathbf P}) {\mathbf R}(t) - {\mathbf A} ({\mathbf I} - {\mathbf P}) {\mathbf z}(t)\right \} \\
& + \Delta t^2 \, \int_{-1}^{1} (1 - \vert \theta \vert)
\left \{ {\mathbf P} {\mathbf R}(t + \theta \Delta t) - {\mathbf A} {\mathbf P} \widetilde{\mathbf z}(\theta \Delta t) \right \}
 \, d \theta \,,
\end{split} \end{equation}
from the comparison of (\ref{eq:GrMi_integral3_sigma}) and (\ref{eq:GrMi_integral4_sigma}), we infer that
\begin{equation} \label{eq:GrMi_approx_eq_sigma_1} \begin{split}
{\mathbf z}(t + \Delta t) & + {\mathbf z}(t - \Delta t) + \frac{\Delta t}{2}
{\mathbf D} \left ({\mathbf z}(t + \Delta t) -  {\mathbf z}(t - \Delta t) \right ) \\
& \simeq  \widetilde {\mathbf z}(\Delta t) + \widetilde {\mathbf z}(- \Delta t) + \frac{\Delta t}{2} {\mathbf D} \left (  \widetilde {\mathbf z}(\Delta t) -
\widetilde {\mathbf z}(- \Delta t) \right )\,.
\end{split} \end{equation}

In our local time-stepping scheme, we shall use the right side of (\ref{eq:GrMi_approx_eq_sigma_1}) to approximate the left side. In doing so, we must
carefully choose $\nu$ in (\ref{eq:GrMi_diff_eq_z_tilde}) to minimize that approximation error. By using Taylor expansions and the fact that $\mathbf z$ and
$\widetilde{\mathbf z}$ solve (\ref{eq:GrMi_semi_disc_model_final}) and (\ref{eq:GrMi_diff_eq_z_tilde}), respectively, we obtain
\begin{equation*} \begin{split}
{\mathbf z}(t + \Delta t) + {\mathbf z}(t - \Delta t) & = 2 {\mathbf z}(t) + {\mathbf z}''(t) \Delta t^2 + {\mathcal O}(\Delta t^4) \\
& = 2 {\mathbf z}(t) + ({\mathbf R}(t) - {\mathbf A} {\mathbf z}(t) - {\mathbf D} \, {\mathbf z}'(t)) \Delta t^2 + {\mathcal O}(\Delta t^4)\,, \\
\widetilde{\mathbf z}(\Delta t) + \widetilde {\mathbf z}(- \Delta t) & = 2 \widetilde {\mathbf z}(0) +
\widetilde {\mathbf z}''(0) \Delta t^2 + {\mathcal O}(\Delta t^4) \\
& = 2 {\mathbf z}(t) + ({\mathbf R}(t) - {\mathbf A} {\mathbf z}(t) - {\mathbf D} \, \nu) \Delta t^2 + {\mathcal O}(\Delta t^4) \,,\\
\end{split} \end{equation*}
together with
\begin{equation*} \begin{split}
{\mathbf z}(t+\Delta t) - {\mathbf z}(t-\Delta t) = 2  {\mathbf z}'(t) \, \Delta t + {\mathcal O}(\Delta t^3)\,, \
\widetilde{\mathbf z}(\Delta t) - \widetilde {\mathbf z}(- \Delta t) = 2  \nu \, \Delta t + {\mathcal O}(\Delta t^3)\,.
\end{split} \end{equation*}
Hence for arbitrary $\nu$, the right side of (\ref{eq:GrMi_approx_eq_sigma_1}) is not sufficiently accurate to approximate the left side while preserving
overall second-order accuracy. However, if we choose
$$
\nu = {\mathbf z}'(t)
$$
in (\ref{eq:GrMi_diff_eq_z_tilde}), the ${\mathcal O}(\Delta t^2)$ terms in (\ref{eq:GrMi_approx_eq_sigma_1}) cancel each other and overall second-order accuracy of the
scheme can be achieved. Since the term on the right side of (\ref{eq:GrMi_approx_eq_sigma_1}) is not symmetric in time, unlike in the previous
section (see (\ref{eq:GrMi_approx_eq}) and (\ref{eq:GrMi_approx_eq_q})), we need to compute the value of $\widetilde {\mathbf z}(\tau)$ both at $\tau = \Delta t$ and
at $\tau = -\Delta t$.

For the numerical solution of (\ref{eq:GrMi_diff_eq_z_tilde}), we shall use the leap-frog scheme with the local time-step $\Delta \tau = \Delta t /p$. Since
the leap-frog scheme is a two-step method, we need a second-order approximation of $\widetilde{\mathbf z}'(0) = {\mathbf z}'(t)$ during every initial
local time-step. Since the value of ${\mathbf z}_{n+1}$ is still unknown at time $t = t_n$, we now derive a second-order approximation
${\mathbf z}'_n \simeq {\mathbf z}'(t)$ that uses only ${\mathbf z}_{n}$ and ${\mathbf z}_{n-1}$. First, we approximate
\begin{equation} \label{eq:GrMi_approz_d_z}
{\mathbf z}'_n \simeq \frac{{\mathbf z}'_{n-1/2} + {\mathbf z}'_{n+1/2}}{2} \,,
\end{equation}
where both ${\mathbf z}'_{n-1/2} \simeq {\mathbf z}'(t - \Delta t/2)$ and ${\mathbf z}'_{n+1/2} \simeq {\mathbf z}'(t + \Delta t /2)$ are second-order
approximations. By using second-order central differences for ${\mathbf z}'_{n-1/2}$,
\begin{equation} \label{eq:GrMi_approx_d_z_minus}
{\mathbf z}'_{n-1/2} = \frac{{\mathbf z}_n - {\mathbf z}_{n-1}}{\Delta t} + {\mathcal O}(\Delta t^2) \,,
\end{equation}
and the differential equation (\ref{eq:GrMi_semi_disc_model_final}) for ${\mathbf z}'_{n+1/2}$,
$$
\frac{{\mathbf z}'_{n+1/2} - {\mathbf z}'_{n-1/2}}{\Delta t} + {\mathbf D} {\mathbf z}'_n =
{\mathbf R}_n - {\mathbf A} {\mathbf z}_n + {\mathcal O}(\Delta t^2) \,,
$$
we obtain
\begin{equation} \label{eq:GrMi_approx_d_z_plus}
{\mathbf z}'_{n+1/2} = \left ( {\mathbf I} + \frac{\Delta t}{2} {\mathbf D} \right )^{-1} \left \{
\left ( {\mathbf I} - \frac{\Delta t}{2} {\mathbf D} \right ) \frac{{\mathbf z}_n - {\mathbf z}_{n-1}}{\Delta t} +
\Delta t {\mathbf R}_n - \Delta t {\mathbf A} {\mathbf z}_n \right \} + {\mathcal O}(\Delta t^2) \,.
\end{equation}
Then, we insert (\ref{eq:GrMi_approx_d_z_minus}), (\ref{eq:GrMi_approx_d_z_plus}) into (\ref{eq:GrMi_approz_d_z}),
which yields a second-order approximation of ${\mathbf z}'(t)$.

In summary, the second-order LTS algorithm for the solution of (\ref{eq:GrMi_semi_disc_model_final}) computes
${\mathbf z}_{n+1} \simeq \mathbf z(t+\Delta t)$, for given $\mathbf z_n$ and $\mathbf z_{n-1}$, as follows:

\bigskip

\noindent {\bf LTS-LFCN2($p$) Algorithm}
{\it\begin{enumerate}
\item Set ${\mathbf w} := ({\mathbf I} - {\mathbf P}){\mathbf R}_n - {\mathbf A}({\mathbf I} - {\mathbf P}) {\mathbf z}_n$,
${\widetilde{\mathbf z}}_0 := {\mathbf z}_n$ and
\begin{equation*} \begin{split}
{\mathbf z}'_n := \frac{1}{2} & \left [ \frac{{\mathbf z}_n - {\mathbf z}_{n-1}}{\Delta t} \right. \\
& \left. + \left ( {\mathbf I} + \frac{\Delta t}{2} {\mathbf D} \right )^{-1}
\left \{ \left ( {\mathbf I} - \frac{\Delta t}{2} {\mathbf D} \right ) \frac{{\mathbf z}_n - {\mathbf z}_{n-1}}{\Delta t} +
\Delta t {\mathbf R}_n - \Delta t {\mathbf A} {\mathbf z}_n \right \} \right ] \,.
\end{split} \end{equation*}
\item Compute
\begin{equation*} \begin{split}
{\widetilde {\mathbf z}}_{1/p} & := {\widetilde{\mathbf z}}_0 + \frac{\Delta t}{p} {\mathbf z}'_n +
\frac{1}{2} \left ( \frac{\Delta t}{p} \right )^2
\left ( \mathbf w + {\mathbf P} {\mathbf R}_{n, 0} - {\mathbf A} {\mathbf P} {\widetilde {\mathbf z}}_0 
- {\mathbf D} {\mathbf z}'_n \right ) \quad \mbox{and} \\
{\widetilde {\mathbf z}}_{-1/p} & := {\widetilde{\mathbf z}}_0 - \frac{\Delta t}{p} {\mathbf z}'_n +
\frac{1}{2} \left ( \frac{\Delta t}{p} \right )^2
\left ( \mathbf w + {\mathbf P} {\mathbf R}_{n, 0} - {\mathbf A} {\mathbf P} {\widetilde {\mathbf z}}_0 
- {\mathbf D} {\mathbf z}'_n \right ) \,.
\end{split} \end{equation*}

\item For $m = 1, \dots, p-1$, compute
\begin{equation*} \begin{split}
{\widetilde {\mathbf z}}_{(m+1)/p} := \left ( {\mathbf I} + \frac{\Delta t}{2p} {\mathbf D} \right )^{-1} & \left \{
2 {\widetilde {\mathbf z}}_{m/p} -
\left ( {\mathbf I} - \frac{\Delta t}{2p} {\mathbf D} \right ) {\widetilde {\mathbf z}}_{(m-1)/p} \right. \\
& \left. + \left ( \frac{\Delta t}{p} \right )^2 ( \mathbf w + {\mathbf P}{\mathbf R}_{n, m} -
{\mathbf A} {\mathbf P} {\widetilde {\mathbf z}}_{m/p} ) \right \}
\end{split} \end{equation*}
and
\begin{equation*} \begin{split}
{\widetilde {\mathbf z}}_{-(m+1)/p} := \left ( {\mathbf I} - \frac{\Delta t}{2p} {\mathbf D} \right )^{-1} & \left \{
2 {\widetilde {\mathbf z}}_{-m/p} -
\left ( {\mathbf I} + \frac{\Delta t}{2p} {\mathbf D} \right ) {\widetilde {\mathbf z}}_{-(m-1)/p} \right. \\
& \left. + \left ( \frac{\Delta t}{p} \right )^2 ( \mathbf w + {\mathbf P}{\mathbf R}_{n, -m} -
{\mathbf A} {\mathbf P} {\widetilde {\mathbf z}}_{-m/p} ) \right \} \,.
\end{split} \end{equation*}

\item Compute
$${\mathbf z}_{n+1} := \widetilde{\mathbf z}_{1} +  \left ( {\mathbf I} + \frac{\Delta t}{2} {\mathbf D} \right )^{-1}
\left ( {\mathbf I} - \frac{\Delta t}{2} {\mathbf D} \right ) \left ( -{\mathbf z}_{n-1} + \widetilde{\mathbf z}_{-1} \right ) \,.$$
\end{enumerate}}

If $\sigma$ is piecewise constant in each element, ${\mathbf M}$ and ${\mathbf M}_{\sigma}$ can be diagonalized simultaneously
and hence the matrix ${\mathbf D}$ is diagonal.
If $\sigma$ varies in elements, $\mathbf D$ is a block-diagonal matrix and both $\left ( {\mathbf I} \pm ({\Delta t}/{2p}) {\mathbf D} \right)$
and $\left ( {\mathbf I} \pm ({\Delta t}/{2}) {\mathbf D} \right)$ can be explicitly inverted at low cost. In that sense, the LTS-LFCN2($p$) scheme 
is truly explicit. Again, if the fraction of nonzero entries in $\mathbf P$ is small, the overall cost is dominated by the computation of 
$\mathbf w$ in step~1.

\begin{proposition}
For ${\mathbf R}(t) \in C^2([0, T])$, the local time-stepping method LTS-LFCN2 is second-order accurate.
\end{proposition}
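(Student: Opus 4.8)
The plan is to show that, started from exact data $\mathbf{z}_n = \mathbf{z}(t_n)$, $\mathbf{z}_{n-1} = \mathbf{z}(t_{n-1})$, one step of LTS-LFCN2($p$) returns $\mathbf{z}_{n+1}$ with $\mathbf{z}_{n+1} - \mathbf{z}(t_{n+1}) = \mathcal{O}(\Delta t^4)$ --- two orders above the advertised global order, as is natural for a three-term recursion approximating a second-order system --- and then to conclude by the standard consistency-plus-stability argument, exactly as in the undamped case treated in \cite{GrMi_DG09, GrMi_GM10}. The hypothesis $\mathbf{R} \in C^2([0, T])$ guarantees, by differentiating (\ref{eq:GrMi_semi_disc_model_final}) twice, that $\mathbf{z} \in C^4([0, T])$, and likewise that $\widetilde{\mathbf{z}}(\tau)$ in (\ref{eq:GrMi_diff_eq_z_tilde}) is $C^4$ in $\tau$; this is precisely what is needed for the Taylor remainders below. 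The starting point is the exact identity on which Step~4 is modeled: with $\nu = \mathbf{z}'(t)$ the expansions displayed before the algorithm give $\widetilde{\mathbf{z}}(0) = \mathbf{z}(t)$ and $\widetilde{\mathbf{z}}''(0) = \mathbf{z}''(t)$, hence $\widetilde{\mathbf{z}}(\Delta t) + \widetilde{\mathbf{z}}(-\Delta t) = \mathbf{z}(t + \Delta t) + \mathbf{z}(t - \Delta t) + \mathcal{O}(\Delta t^4)$ and $\widetilde{\mathbf{z}}(\Delta t) - \widetilde{\mathbf{z}}(-\Delta t) = \mathbf{z}(t + \Delta t) - \mathbf{z}(t - \Delta t) + \mathcal{O}(\Delta t^3)$; multiplying the second relation by $(\Delta t/2)\mathbf{D}$ and adding shows that $\mathbf{z}$ satisfies (\ref{eq:GrMi_approx_eq_sigma_1}) --- equivalently $(\mathbf{I} + (\Delta t/2)\mathbf{D})\,\mathbf{z}(t+\Delta t) + (\mathbf{I} - (\Delta t/2)\mathbf{D})\,\mathbf{z}(t-\Delta t) = (\mathbf{I} + (\Delta t/2)\mathbf{D})\,\widetilde{\mathbf{z}}(\Delta t) + (\mathbf{I} - (\Delta t/2)\mathbf{D})\,\widetilde{\mathbf{z}}(-\Delta t)$ --- up to an $\mathcal{O}(\Delta t^4)$ remainder. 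Step~4 is exactly this identity with $\widetilde{\mathbf{z}}(\pm\Delta t)$ replaced by the numerical surrogates $\widetilde{\mathbf{z}}_{\pm 1}$ produced in Steps~2--3.

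It then remains to control $\widetilde{\mathbf{z}}_{\pm 1}$. Run on exact data, Step~1 yields $\mathbf{z}'_n = \mathbf{z}'(t) + \mathbf{g}$ with $\mathbf{g} = \mathcal{O}(\Delta t^2)$ (the order of the reconstruction (\ref{eq:GrMi_approz_d_z})--(\ref{eq:GrMi_approx_d_z_plus}) verified in the text), and Steps~2--3 are the leap-frog/Crank--Nicolson discretization of (\ref{eq:GrMi_diff_eq_z_tilde}) --- with $\nu$ replaced by $\mathbf{z}'_n$ --- on $[0, \Delta t]$ (forward sweep) and on $[-\Delta t, 0]$ (backward sweep), using the $p$ substeps $\Delta\tau = \Delta t/p$. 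Two effects must be tracked. (i) Replacing $\nu = \mathbf{z}'(t)$ by $\mathbf{z}'_n$ perturbs $\widetilde{\mathbf{z}}(\tau)$ by the solution $\mathbf{h}(\tau)$ of the linear homogeneous problem with $\mathbf{h}(0) = \mathbf{0}$, $\mathbf{h}'(0) = \mathbf{g}$; then $\mathbf{h}(\Delta t) + \mathbf{h}(-\Delta t) = \mathbf{h}''(0)\,\Delta t^2 + \mathcal{O}(\Delta t^4) = \mathcal{O}(\Delta t^2 |\mathbf{g}|) = \mathcal{O}(\Delta t^4)$, while $(\Delta t/2)\mathbf{D}(\mathbf{h}(\Delta t) - \mathbf{h}(-\Delta t)) = (\Delta t/2)\mathbf{D}(2\Delta t\,\mathbf{g} + \mathcal{O}(\Delta t^3)) = \mathcal{O}(\Delta t^4)$, so this perturbation enters Step~4 only at order $\mathcal{O}(\Delta t^4)$. (ii) With $\nu = \mathbf{z}'_n$ fixed, the leap-frog/Crank--Nicolson sub-integration over a fixed interval of length $\Delta t$ has per-step defect $\mathcal{O}(\Delta\tau^4)$ and, by the Step~2 formula, initialization defects $\widetilde{\mathbf{z}}_{\pm 1/p} - \widetilde{\mathbf{z}}(\pm\Delta\tau) = \mp\,(\Delta\tau^3/6)\,\widetilde{\mathbf{z}}'''(0) + \mathcal{O}(\Delta\tau^4)$; solving the linear error recursion by its discrete Green's function, and using that the homogeneous leap-frog/Crank--Nicolson propagator is power-bounded under the CFL condition, the accumulated error on each sweep is $\mathcal{O}(\Delta t^3/p^2)$, hence only $\mathcal{O}(\Delta t^3)$ individually. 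The point is that the initialization defects on the two sweeps are opposite to leading order and the forward and backward recursions are mirror images up to terms carrying an extra factor $\Delta\tau$ (the Crank--Nicolson shift $\pm(\Delta\tau/2)\mathbf{D}$) or appearing multiplied by $\Delta t$ in Step~4: writing $\mathbf{s} := (\widetilde{\mathbf{z}}_1 - \widetilde{\mathbf{z}}(\Delta t)) + (\widetilde{\mathbf{z}}_{-1} - \widetilde{\mathbf{z}}(-\Delta t))$ and $\mathbf{d} := (\widetilde{\mathbf{z}}_1 - \widetilde{\mathbf{z}}(\Delta t)) - (\widetilde{\mathbf{z}}_{-1} - \widetilde{\mathbf{z}}(-\Delta t))$, the $\mathcal{O}(\Delta t^3)$ parts cancel in $\mathbf{s}$, leaving $\mathbf{s} = \mathcal{O}(\Delta t^4)$ and $\mathbf{d} = \mathcal{O}(\Delta t^3)$, so that $\mathbf{s} + (\Delta t/2)\mathbf{D}\,\mathbf{d} = \mathcal{O}(\Delta t^4)$. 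Combining (i) and (ii), Step~4 with exact data gives $(\mathbf{I} + (\Delta t/2)\mathbf{D})(\mathbf{z}_{n+1} - \mathbf{z}(t_{n+1})) = \mathcal{O}(\Delta t^4)$; since $\mathbf{D}$ is symmetric positive semidefinite, $(\mathbf{I} + (\Delta t/2)\mathbf{D})^{-1}$ is bounded uniformly in $\Delta t$, and the local consistency error is therefore $\mathcal{O}(\Delta t^4)$.

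Finally, I would cast the scheme as a one-step map on the pair $(\mathbf{z}_n, \mathbf{z}_{n-1})$; its amplification matrix is a perturbation of the identity by an $\mathcal{O}(\Delta t)$ term in which the potentially dangerous $\Delta t^2$-part is of the same $\mathbf{A}_p$-type controlled by the CFL condition as for LTS-LF2($p$) (see (\ref{eq:GrMi_matrix_Ap})), while $\mathbf{D}$ enters only through the contractive factors $(\mathbf{I} \pm (\Delta t/2p)\mathbf{D})^{\mp 1}$ and $(\mathbf{I} \pm (\Delta t/2)\mathbf{D})^{\mp 1}$; hence the discrete flow is power-bounded under that CFL condition. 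Together with the $\mathcal{O}(\Delta t^4)$ local error, the usual discrete Gr\"onwall (Lax-equivalence) argument then yields a global error $\mathcal{O}(\Delta t^2)$, which is the claim.

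The step I expect to be the main obstacle is the cancellation in (ii): making rigorous that the $\mathcal{O}(\Delta t^3)$ contributions of the approximate initial velocity and of the sub-stepping initialization annihilate each other in the symmetric combination of the forward and backward sweeps. Because the Crank--Nicolson treatment of the damping and the time-shifted sources $\mathbf{R}(t \pm \tau)$ both break exact time-reversal symmetry, one cannot merely invoke symmetry; one must carry the forward/backward coupling through the discrete Green's function of the leap-frog/Crank--Nicolson recursion and check that every symmetry-violating term comes with a compensating power of $\Delta\tau$ or of $\Delta t$. This is exactly where the specific design of the algorithm --- the second-order reconstruction of $\mathbf{z}'_n$ in Step~1, the \emph{identical} right-hand side used for $\widetilde{\mathbf{z}}_{1/p}$ and $\widetilde{\mathbf{z}}_{-1/p}$ in Step~2, and the mirrored Crank--Nicolson factors in Steps~3--4 --- is essential, and it is the part I would write out in full detail.
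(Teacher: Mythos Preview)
The paper itself does not give a proof here; it simply refers to \cite{GrMi_GM10}. So there is no in-paper argument to compare against, only the expectation that the referenced proof follows the consistency-plus-stability template already used for LTS-LF2($p$) in \cite{GrMi_DG09,GrMi_GM10}. Your outline is precisely that template, adapted to the damped case, and the ingredients you isolate---the $\mathcal{O}(\Delta t^4)$ accuracy of the continuous relation (\ref{eq:GrMi_approx_eq_sigma_1}) once $\nu=\mathbf z'(t)$, the $\mathcal{O}(\Delta t^2)$ reconstruction of $\mathbf z'_n$, and the leap-frog/Crank--Nicolson sub-integration of (\ref{eq:GrMi_diff_eq_z_tilde})---are exactly the ones the derivation in Section~3.3 sets up for a proof. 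Your treatment of the velocity perturbation in (i) is clean and correct: with $\mathbf h(0)=\mathbf 0$, $\mathbf h'(0)=\mathbf g=\mathcal O(\Delta t^2)$ one has $\mathbf h''(0)=-\mathbf D\mathbf g$, so both $\mathbf h(\Delta t)+\mathbf h(-\Delta t)$ and $(\Delta t/2)\mathbf D(\mathbf h(\Delta t)-\mathbf h(-\Delta t))$ are $\mathcal O(\Delta t^4)$ individually.

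The one place where your sketch is genuinely only a sketch is the cancellation in (ii), and you flag it yourself. The claim that the $\mathcal O(\Delta\tau^3)$ initialization defects are opposite on the two sweeps is correct (Step~2 uses the \emph{same} second-order Taylor polynomial for $\widetilde{\mathbf z}_{\pm 1/p}$, so the missing cubic terms are $\mp(\Delta\tau^3/6)\widetilde{\mathbf z}'''(0)$), and the mirrored Crank--Nicolson factors $(\mathbf I\mp(\Delta\tau/2)\mathbf D)^{-1}$ in Step~3 together with the source symmetry $\mathbf R_{n,\pm m}$ do make the two recursions reflections of one another up to terms carrying an extra $\Delta\tau$. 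Turning this into a clean $\mathbf s=\mathcal O(\Delta t^4)$ statement requires writing the forward and backward error recursions side by side and showing that the discrete Green's function applied to the odd defect is itself odd to leading order; this is routine but must be written out, and it is indeed where the specific design choices of Steps~1--4 are used. Once that is done, the one-step reformulation and power-boundedness under the CFL condition close the argument exactly as you indicate.
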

\begin{proof} See \cite{GrMi_GM10}.\end{proof}

\begin{remark}
For $\sigma = 0$ (${\mathbf D} = {\mathbf 0}$), the LTS-LFCN2($p$) algorithm  coincides with the LTS-LF2($p$) algorithm and thus also conserves the discrete energy
(\ref{eq:GrMi_disc_energy_LT}). For $\sigma \not = 0$ and $p=1$, i.e. no local mesh refinement, one can easily show that the energy is no longer
conserved but decays with time (independently of $\sigma$) under the same CFL condition as in the case with $\sigma = 0$.
\end{remark}

\section{Adams-Bashforth based LTS methods for damped waves}
Starting from the standard leap-frog method, we proposed in Section \ref{sec:GrMi_LTS-LF} energy conserving fully explicit LTS integrators of arbitrarily high accuracy
for undamped waves. By blending the leap-frog and the Crank-Nicolson methods, a second-order LTS scheme was also derived there for damped waves, yet this approach
cannot be readily extended beyond order two. To achieve arbitrarily high accuracy in the presence of damping, while remaining fully explicit, we shall derive here
explicit LTS methods for damped wave equations based on Adams-Bashforth (AB) multi-step schemes.

The $H^1$-conforming and the IP-DG finite element discretizations of (\ref{eq:GrMi_model_eq_1}) presented in Section \ref{sec:GrMi_fem_wave} lead to the
second-order system of differential equations (\ref{eq:GrMi_sdisc_cfem}), whereas the nodal DG discretization leads to the first-order system of differential equations
(\ref{eq:GrMi_sd_pr2}). In both (\ref{eq:GrMi_sdisc_cfem}) and (\ref{eq:GrMi_sd_pr2}) the mass matrix ${\mathbf M}$ is symmetric, positive definite and
essentially diagonal; thus, ${\mathbf M}^{-1}$ or ${\mathbf M}^{-\frac{1}{2}}$ can be computed explicitly at a negligible cost. For simplicity, we restrict 
ourselves here to the homogeneous case, i.e. ${\mathbf F}(t) = \mathbf{0}$.

If we multiply (\ref{eq:GrMi_sdisc_cfem}) by ${\mathbf M}^{-\frac{1}{2}}$, we obtain (\ref{eq:GrMi_semi_disc_model_final}).
Thus, we can rewrite (\ref{eq:GrMi_semi_disc_model_final}) as a first-order problem of the form
\begin{equation} \label{eq:GrMi_sd_mod_pr}
\frac{d {\mathbf y}}{d t}(t) = {\mathbf B} {\mathbf y}(t)\,,
\end{equation} with
$$
{\mathbf y}(t) = \left ( {\mathbf z}(t), \frac{d {\mathbf z}}{d t}(t) \right )^T\,, \qquad {\mathbf B} = \left ( \begin{array}{rr}
{\mathbf 0} & {\mathbf I} \\ -{\mathbf A} & -{\mathbf D} \end{array} \right ) \,.
$$
Similarly, we can also rewrite (\ref{eq:GrMi_sd_pr2}) in the form (\ref{eq:GrMi_sd_mod_pr}) with
${\mathbf y}(t) = {\mathbf Q}(t)$ and ${\mathbf B} = {\mathbf M}^{-1}\left ( -{\mathbf M}_\sigma - {\mathbf C} \right )$. Hence all three distinct finite element
discretizations from Section \ref{sec:GrMi_fem_wave} lead to a semi-discrete system as in (\ref{eq:GrMi_sd_mod_pr}). Starting from explicit multi-step AB methods,
we shall now derive explicit LTS schemes of arbitrarily high accuracy for a general problem of the form (\ref{eq:GrMi_sd_mod_pr}).

First, we briefly recall the construction of the classical $k$-step ($k$th-order) Adams-Bashforth method for the numerical solution of
(\ref{eq:GrMi_sd_mod_pr}) \cite{GrMi_HNW00}. Let $t_i = i \Delta t$ and ${\mathbf y}_n$, ${\mathbf y}_{n-1}$,..., ${\mathbf y}_{n-k+1}$ the numerical approximations
to the exact solution ${\mathbf y}(t_n)$, $\dots$, \linebreak ${\mathbf y}(t_{n-k+1})$. The solution of (\ref{eq:GrMi_sd_mod_pr}) satisfies
\begin{equation} \label{eq:GrMi_int_eq}
{\mathbf y}(t_n + \xi \Delta t) = {\mathbf y}(t_n) + \int_{t_n}^{t_n + \xi \Delta t} {\mathbf B} {\mathbf y}(t) \, dt \,, \qquad 0 < \xi \leq 1\,.
\end{equation}
We now replace the unknown solution ${\mathbf y}(t)$ under the integral in (\ref{eq:GrMi_int_eq}) by the interpolation polynomial $p(t)$ through the points
$(t_i, {\mathbf y}_i)$, $i = n-k+1, \dots, n$. It is explicitly given in terms of backward differences
$$
\nabla^0 {\mathbf y}_n = {\mathbf y}_n\,, \quad \nabla^{j+1} {\mathbf y}_n = \nabla^{j} {\mathbf y}_n - \nabla^{j} {\mathbf y}_{n-1}
$$
by
\begin{equation*}
p(t) = p(t_n + s \Delta t) = \sum_{j = 0}^{k-1}(-1)^j \left (\begin{array}{c} -s \\ j \end{array} \right ) \nabla^{j} {\mathbf y}_n \,.
\end{equation*}
Integration of (\ref{eq:GrMi_int_eq}) with ${\mathbf y}(t)$ replaced by $p(t)$ then yields the approximation ${\mathbf y}_{n+\xi}$ of
${\mathbf y}(t_n + \xi \Delta t)$, $0 < \xi \leq 1$,
\begin{equation} \label{eq:GrMi_ab}
{\mathbf y}_{n+\xi} = {\mathbf y}_n + \Delta t {\mathbf B} \sum_{j=0}^{k-1} \gamma_j (\xi) \nabla^{j} {\mathbf y}_n\,,
\end{equation}
where the polynomials $\gamma_j(\xi)$ are defined as
$$
\gamma_j(\xi) = (-1)^j \int_0^{\xi} \left ( \begin{array}{c} -s \\ j \end{array} \right ) \, ds \,.
$$
They are given in Table \ref{tab:GrMi_tab1} for $j \leq 3$. After expressing the backward differences in terms of ${\mathbf y}_{n-j}$ and setting $\xi = 1$ in
(\ref{eq:GrMi_ab}), we recover the common form of the $k$-step Adams-Bashforth scheme \cite{GrMi_HNW00}
\begin{equation} \label{eq:GrMi_stand_ab}
{\mathbf y}_{n+1} = {\mathbf y}_n + \Delta t {\mathbf B} \sum_{j=0}^{k-1} \alpha_j {\mathbf y}_{n-j} \,,
\end{equation}
where the coefficients $\alpha_j$, $j = 0,\dots, k-1$ for the second, third- and fourth-order ($k=2,3,4$) Adams-Bashforth schemes are given in Table
\ref{tab:GrMi_tab2}. For higher values of $k$ we refer to \cite{GrMi_HNW00}.

\begin{table}[h!]
\begin{center}
\renewcommand{\arraystretch}{1.5}
\begin{tabular}{c|cccc} \hline
$j$ & 0 & 1 & 2 & 3  \\ \hline $\gamma_j(\xi)$ & $\xi$ &
$\frac{1}{2} \xi^2$ & $\frac{1}{6} \xi^3 + \frac{1}{4} \xi^2$ &
$\frac{1}{24} \xi^4 + \frac{1}{6} \xi^3 + \frac{1}{6} \xi^2$ \\
\hline
\end{tabular}
\caption{Coefficients $\gamma_j(\xi)$ for the explicit Adams-Bashforth methods.} \label{tab:GrMi_tab1}
\end{center}
\end{table}
\begin{table}[ht!]
\begin{center}
\renewcommand{\arraystretch}{1.5}
\begin{tabular}{c|cccc} \hline
      & $\alpha_0$      & $\alpha_1$       & $\alpha_2$      & $\alpha_3$      \\ \hline
$k=2$ & $\frac{3}{2}$   & $-\frac{1}{2}$   & 0               & 0
\\ \hline $k=3$ & $\frac{23}{12}$ & $-\frac{16}{12}$ &
$\frac{5}{12}$  & 0               \\ \hline $k=4$ & $\frac{55}{24}$
& $-\frac{59}{24}$ & $\frac{37}{24}$ & $-\frac{9}{24}$ \\ \hline
\end{tabular}
\caption{Coefficients for the $k$-th order Adams-Bashforth methods.} \label{tab:GrMi_tab2}
\end{center}
\end{table}

Starting from the classical AB methods, we shall now derive LTS schemes of arbitrarily high accuracy for (\ref{eq:GrMi_sd_mod_pr}), which allow arbitrarily
small time-steps precisely where small elements in the spatial mesh are located. To do so, we first split the unknown vector ${\mathbf y}(t)$ in two parts
\begin{equation*}
{\mathbf y}(t) = ({\mathbf I} - {\mathbf P}) {\mathbf y}(t) + {\mathbf P} {\mathbf y}(t) =
{\mathbf y}^{[\mbox{\scriptsize{coarse}}]}(t) + {\mathbf y}^{[\mbox{\scriptsize fine}]}(t)\,,
\end{equation*}
where the matrix $\mathbf P$ is diagonal. Its diagonal entries, equal to zero or one, identify the unknowns associated with the locally refined region, where smaller
time-steps are needed.

The exact solution of (\ref{eq:GrMi_sd_mod_pr}) again satisfies
\begin{equation} \label{eq:GrMi_tmp1}
{\mathbf y}(t_n + \xi \Delta t) = {\mathbf y}(t_n) + \int_{t_n}^{t_n + \xi \Delta t} {\mathbf B}
\left ( {\mathbf y}^{[\mbox{\scriptsize{coarse}}]}(t) + {\mathbf y}^{[\mbox{\scriptsize fine}]}(t) \right ) \,
dt \,, \qquad 0 < \xi \leq 1\,.
\end{equation}
Since we wish to use the standard $k$-step Adams-Bashforth method in the coarse region, we approximate the term in (\ref{eq:GrMi_tmp1}) that involve ${\mathbf
y}^{[\mbox{\scriptsize coarse}]}(t)$ as in (\ref{eq:GrMi_int_eq}), which yields
\begin{equation} \label{eq:GrMi_tmp2}
{\mathbf y}(t_n + \xi \Delta t) \approx {\mathbf y}_n + \Delta t \, {\mathbf B}
({\mathbf I} - {\mathbf P}) \sum_{j=0}^{k-1} \gamma_j (\xi) \nabla^{j} {\mathbf y}_n + \int_{t_n}^{t_n + \xi \Delta t}
{\mathbf B}{\mathbf P} {\mathbf y}(t) \, dt \,.
\end{equation}
To circumvent the severe stability constraint due to the smallest elements associated with ${\mathbf y}^{[\mbox{\scriptsize fine}]}(t)$, we shall now treat
${\mathbf y}^{[\mbox{\scriptsize fine}]}(t)$ differently from ${\mathbf y}^{[\mbox{\scriptsize coarse}]}(t)$. Hence, we instead approximate the integrand in
(\ref{eq:GrMi_tmp2}) as
\begin{equation*}
\int_{t_n}^{t_n + \xi \Delta t} {\mathbf B}{\mathbf P} {\mathbf y}(t) \, dt \approx
\int_{0}^{\xi \Delta t} {\mathbf B}{\mathbf P} {\widetilde{\mathbf y}}(\tau) \, d \tau \,,
\end{equation*}
where ${\widetilde {\mathbf y}}(\tau)$ solves the differential
equation
\begin{equation} \label{eq:GrMi_mod_pr} \begin{split}
\frac{d {\widetilde {\mathbf y}}}{d \tau}(\tau) & =
{\mathbf B}({\mathbf I} - {\mathbf P}) \sum_{j=0}^{k-1} {\widetilde \gamma}_j
\left ( \frac{\tau}{\Delta t} \right ) \nabla^{j} {\mathbf y}_n +
{\mathbf B}{\mathbf P} \, {\widetilde {\mathbf y}}(\tau) \,,\\
{\widetilde {\mathbf y}}(0) & = {\mathbf y}_n\,,
\end{split} \end{equation}
with coefficients
\begin{equation} \label{eq:GrMi_coeff_gamma_lts}
{\widetilde \gamma}_j (\xi) = \frac{d}{d \xi} \gamma_j ({\xi}) = \frac{d}{d \xi} \left ( (-1)^j \int_0^\xi \binom{-s}{j} \, ds \right ) =
(-1)^j \binom{-\xi}{j}\,.
\end{equation}
The polynomials ${\widetilde \gamma}_j (\xi)$ are given in Table \ref{tab:GrMi_tab3} for $j \leq 3$.
\begin{table}[t!]
\begin{center}
\renewcommand{\arraystretch}{1.5}
\begin{tabular}{c|cccc} \hline
$j$ & 0 & 1 & 2 & 3  \\ \hline ${\widetilde \gamma}_j(\xi)$ & $1$ &
$\xi$ & $\frac{1}{2} \xi^2 + \frac{1}{2} \xi$ & $\frac{1}{6} \xi^3 +
\frac{1}{2} \xi^2 + \frac{1}{3} \xi$ \\ \hline
\end{tabular}
\caption{The polynomial coefficients ${\widetilde \gamma_j}(\xi)$} \label{tab:GrMi_tab3}
\end{center}
\end{table}
Replacing ${\mathbf y}(t)$ by ${\widetilde {\mathbf y}}(t)$ in (\ref{eq:GrMi_tmp2}), we obtain
\begin{equation} \label{eq:GrMi_tmp4}
{\mathbf y}(t_n + \xi \Delta t) \approx {\mathbf y}_n + \Delta t \, {\mathbf B}
({\mathbf I} - {\mathbf P}) \sum_{j=0}^{k-1} \gamma_j (\xi) \nabla^{j} {\mathbf y}_n +
\int_{0}^{\xi \Delta t} {\mathbf B}{\mathbf P} {\widetilde{\mathbf y}}(\tau) \, d \tau \,.
\end{equation}
By considering (\ref{eq:GrMi_mod_pr}) in integrated form, we find that
\begin{equation} \label{eq:GrMi_tmp3} \begin{split}
{\widetilde {\mathbf y}}(\xi \Delta t) & =
{\widetilde {\mathbf y}}(0) + {\mathbf B}({\mathbf I} - {\mathbf P})
\sum_{j=0}^{k-1} \left (
\int_0^{\xi \Delta t} {\widetilde \gamma}_j \left( \frac{\tau}{\Delta t} \right) \, d \tau \right )
\nabla^{j} {\mathbf y}_n
+ \int_{0}^{\xi \Delta t} {\mathbf B}{\mathbf P} \, {\widetilde {\mathbf y}}(\tau) \, d \tau \\
& = {\mathbf y}_n + \Delta t \, {\mathbf B}({\mathbf I} - {\mathbf P})
\sum_{j=0}^{k-1}  \gamma_j (\xi) \nabla^{j} {\mathbf y}_n +
\int_{0}^{\xi \Delta t} {\mathbf B}{\mathbf P} \, {\widetilde {\mathbf y}}(\tau) \, d \tau \,.
\end{split} \end{equation}
From the comparison of (\ref{eq:GrMi_tmp4}) and (\ref{eq:GrMi_tmp3}) we infer that
$${\mathbf y}(t_n + \xi \Delta t) \approx {\widetilde {\mathbf y}}(\xi \Delta t) \,.$$
Thus to advance ${\mathbf y}(t_n)$ from $t_n$ to $t_n + \Delta t$, we shall evaluate $\widetilde{\mathbf y}(\Delta t)$ by
solving (\ref{eq:GrMi_mod_pr}) on $[0, \Delta t]$ numerically. We solve (\ref{eq:GrMi_mod_pr}) until $\tau = \Delta t$ again with a $k$-step
Adams-Bashforth scheme, using a smaller time-step $\Delta \tau = \Delta t / p$, where $p$ denotes the ratio of local
refinement. For $m = 0, \dots, p-1$ we then have
\begin{equation} \begin{split} \label{eq:GrMi_tmp5}
\widetilde{\mathbf y}_{(m+1)/p} = \widetilde{\mathbf y}_{m/p} & +
\Delta \tau \, {\mathbf B}({\mathbf I} - {\mathbf P})
\sum_{\ell = 0}^{k-1} \alpha_{\ell} \sum_{j = 0}^{k-1} {\widetilde \gamma}_j \left ( \frac{m-\ell}{p} \right )
\nabla^{j} {\mathbf y}_n \\
& + \Delta \tau \, {\mathbf B}{\mathbf P} \sum_{\ell = 0}^{k-1} \alpha_{\ell} {\widetilde{\mathbf y}}_{(m-l)/p} \,,
\end{split} \end{equation}
where $\alpha_\ell$, $\ell = 0,\dots, k-1$ denote the coefficients of the classical $k$-step AB scheme
(see Table \ref{tab:GrMi_tab2}). Finally, after expressing the backward differences in terms of
${\mathbf y}_{n-\ell}$, we find
\begin{equation} \label{eq:GrMi_lts_ab}
\widetilde{\mathbf y}_{(m+1)/p} = \widetilde{\mathbf y}_{m/p} +
\Delta \tau \, {\mathbf B}({\mathbf I} - {\mathbf P}) \sum_{\ell = 0}^{k-1} \beta_{m, \ell} \, {\mathbf y}_{n-\ell} +
\Delta \tau \, {\mathbf B}{\mathbf P} \sum_{\ell = 0}^{k-1} \alpha_{\ell} \, {\widetilde{\mathbf y}}_{(m-l)/p} \,,
\end{equation}
where the constant coefficients $\beta_{m, \ell}$, $m=0,\dots,p-1$, $\ell= 0,\dots,k-1$, satisfy
\begin{equation} \label{eq:GrMi_coeff_beta_lts}
\beta_{m, \ell} = \sum_{i=0}^{k-1} \alpha_i
\sum_{j = \ell}^{k-1} (-1)^{\ell} \left ( \begin{array}{c} j \\ \ell \end{array} \right )
{\widetilde \gamma}_j \left ( \frac{m-i}{p} \right ) \,,
\end{equation}
with ${\widetilde \gamma}_j$ defined in (\ref{eq:GrMi_coeff_gamma_lts}).

In summary, the LTS-AB$k$($p$) algorithm computes ${\mathbf y}_{n+1} \simeq \mathbf y(t_n +
\Delta t)$, given $\mathbf y_n$, ${\mathbf y}_{n-1}$,..., ${\mathbf y}_{n-k+1}$,
${\mathbf B}({\mathbf I} - {\mathbf P}) {\mathbf y}_{n-1}, \dots$, ${\mathbf B}({\mathbf I} - {\mathbf P}) {\mathbf y}_{n-k+1}$
and ${\mathbf P}{\mathbf y}_{n-1/p}$, ${\mathbf P}{\mathbf y}_{n-2/p}$, $\dots$, ${\mathbf P}{\mathbf y}_{n-(k-1)/p}$ as follows:

\bigskip

\noindent {\bf LTS-AB$k$($p$) Algorithm}{\it
\begin{enumerate}
\item Set ${\widetilde{\mathbf y}}_0:= \mathbf y_n$, ${\widetilde{\mathbf y}}_{-\ell/p}:= {\mathbf P}{\mathbf y}_{n-\ell/p}$,
$\ell = 1, \dots, k-1$.
\item Set ${\mathbf w}_{n-\ell} := {\mathbf B}({\mathbf I} - {\mathbf P}) {\mathbf y}_{n-\ell}$, $\ell = 1, \dots, k-1$.
\item Compute ${\mathbf w}_{n} := {\mathbf B}({\mathbf I} - {\mathbf P}) {\mathbf y}_{n}$.
\item For $m = 0, \dots, p-1$, compute
\begin{equation*}
\widetilde{\mathbf y}_{(m+1)/p} := \widetilde{\mathbf y}_{m/p} + \frac{\Delta t}{p}
\sum_{\ell = 0}^{k-1} \beta_{m, \ell} \, {\mathbf w}_{n-\ell} +
\frac{\Delta t}{p} {\mathbf B}{\mathbf P} \sum_{\ell = 0}^{k-1} \alpha_{\ell} \, {\widetilde{\mathbf y}}_{(m-l)/p} \,.
\end{equation*}
\item Set ${\mathbf y}_{n+1} := \widetilde{\mathbf y}_1$.
\end{enumerate}}
Steps 1-4 correspond to the numerical solution of (\ref{eq:GrMi_mod_pr}) until $\tau = \Delta t$ with the $k$-step
AB scheme, using the local time-step $\Delta \tau = \Delta t / p$. For $\mathbf P = \mathbf 0$ or $p=1$, that is without
any local time-stepping, we thus recover the standard $k$-step Adams-Bashforth scheme. If the fraction of nonzero entries in
$\mathbf P$ is small, the overall cost is dominated by the computation of ${\mathbf w}_{n}$ in Step 3,  which requires one
multiplications by ${\mathbf B}({\mathbf I} - {\mathbf P})$ per time-step $\Delta t$. All further matrix-vector
multiplications by ${\mathbf B} {\mathbf P}$ only affect those unknowns that lie inside the refined region, or immediately
next to it; hence, their computational cost remains negligible as long as the locally refined region contains a small part of $\Omega$.

We have shown above how to derive LTS-AB$k$($p$) schemes of arbitrarily high accuracy. Since the third- and fourth-order LTS-AB$k$($p$)
schemes are probably the most relevant for applications, we now describe the LTS-AB$k$($p$) schemes for $k=3$, 4 and $p=2$.
Other examples of LTS Adams-Bashforth schemes are listed in \cite{GrMi_GM11}.

For $k=3$ and $p=2$, the LTS-AB$3$($2$) method reads:
\begin{equation*} \begin{split}
\widetilde{\mathbf y}_{1/2} = {{\mathbf y}}_n & + \frac{\Delta t}{2} {\mathbf B}({\mathbf I} - {\mathbf P}) \left [
\frac{17}{12} {\mathbf y}_n - \frac{7}{12} {\mathbf y}_{n-1} +  \frac{2}{12} {\mathbf y}_{n-2} \right ] \\
& + \frac{\Delta t}{2}{\mathbf B}{\mathbf P} \left [ \frac{23}{12} {{\mathbf y}}_n -
\frac{16}{12} {\widetilde{\mathbf y}}_{-1/2} + \frac{5}{12} {{\mathbf y}}_{n-1} \right ] \,, \\
{\mathbf y}_{n+1} = \widetilde{\mathbf y}_{1} = \widetilde{\mathbf y}_{1/2} & + \frac{\Delta t}{2} {\mathbf B}({\mathbf I} - {\mathbf
P}) \left [ \frac{29}{12} {\mathbf y}_n - \frac{25}{12} {\mathbf y}_{n-1} +  \frac{8}{12} {\mathbf y}_{n-2} \right ] \\
& + \frac{\Delta t}{2}{\mathbf B}{\mathbf P} \left [ \frac{23}{12}
{\widetilde{\mathbf y}}_{1/2} - \frac{16}{12} {{\mathbf y}}_{n} + \frac{5}{12} {\widetilde{\mathbf y}}_{-1/2} \right ] \,.
\end{split} \end{equation*}

For the case with $k=4$ and $p = 2$, we find the LTS-AB$4$($2$) scheme:
\begin{equation*} \begin{split}
\widetilde{\mathbf y}_{1/2}  = {{\mathbf y}}_n & +
\frac{\Delta t}{2} {\mathbf B}({\mathbf I} - {\mathbf P}) \left [
\frac{297}{192} {\mathbf y}_n - \frac{187}{192} {\mathbf y}_{n-1} +  \frac{107}{192} {\mathbf y}_{n-2} - \frac{25}{192} {\mathbf y}_{n-3}\right ] \\
& + \frac{\Delta t}{2}{\mathbf B}{\mathbf P} \left [ \frac{55}{24} {{\mathbf y}}_n
- \frac{59}{24} {\widetilde{\mathbf y}}_{-1/2} + \frac{37}{24} {{\mathbf y}}_{n-1} - \frac{9}{24} {\widetilde{\mathbf y}}_{-3/2} \right ] \,, \\
{\mathbf y}_{n+1} = \widetilde{\mathbf y}_{1} = \widetilde{\mathbf y}_{1/2} & + \frac{\Delta t}{2} {\mathbf B}({\mathbf I} - {\mathbf P}) \left [
\frac{583}{192} {\mathbf y}_n - \frac{757}{192} {\mathbf y}_{n-1} + \frac{485}{192} {\mathbf y}_{n-2} - \frac{119}{192} {\mathbf y}_{n-3} \right ] \\
& + \frac{\Delta t}{2}{\mathbf B}{\mathbf P} \left [ \frac{55}{24} {\widetilde{\mathbf y}}_{1/2}
- \frac{59}{24} {{\mathbf y}}_{n} + \frac{37}{24} {\widetilde{\mathbf y}}_{-1/2} - \frac{9}{24} {{\mathbf y}}_{n-1} \right ] \,.
\end{split} \end{equation*}

\begin{proposition}
The local time-stepping method LTS-AB$k$($p$) is consistent of order $k$.
\end{proposition}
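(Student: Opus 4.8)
The plan is to establish order-$k$ consistency in the usual sense: show that one macro-step of LTS-AB$k$($p$), fed with the exact data $\mathbf{y}(t_{n-\ell})$, $\ell=0,\dots,k-1$, together with the exact auxiliary quantities $\mathbf{B}(\mathbf{I}-\mathbf{P})\mathbf{y}(t_{n-\ell})$ and $\mathbf{P}\mathbf{y}(t_n-\ell\Delta t/p)$, returns a value $\mathbf{y}_{n+1}$ with $\mathbf{y}_{n+1}-\mathbf{y}(t_{n+1}) = \mathcal{O}(\Delta t^{k+1})$, all constants being independent of $\Delta t$ (for fixed refinement ratio $p$). The error is split into two pieces: the error committed by replacing the exact evolution by the auxiliary problem (\ref{eq:GrMi_mod_pr}), and the error committed by integrating (\ref{eq:GrMi_mod_pr}) numerically with the inner $k$-step Adams-Bashforth scheme of step $\Delta\tau=\Delta t/p$. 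As a preliminary, I would invoke the derivation preceding the statement, which shows that Steps 1--5 of the algorithm are exactly that inner Adams-Bashforth discretization of (\ref{eq:GrMi_mod_pr}): the coefficients $\beta_{m,\ell}$ in (\ref{eq:GrMi_coeff_beta_lts}) are obtained from the $\widetilde\gamma_j$ of (\ref{eq:GrMi_coeff_gamma_lts}) by rewriting the backward differences $\nabla^j\mathbf{y}_n$ in terms of $\mathbf{y}_{n-\ell}$, and because $\mathbf{P}^2=\mathbf{P}$ only the fine parts $\mathbf{P}\mathbf{y}_{n-\ell/p}$ are needed in the $\mathbf{B}\mathbf{P}$-term.

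\textbf{Step 1 (the auxiliary problem reproduces the exact evolution to $\mathcal{O}(\Delta t^{k+1})$).} With exact starting data, the driving term $\mathbf{B}(\mathbf{I}-\mathbf{P})\sum_{j=0}^{k-1}\widetilde\gamma_j(\tau/\Delta t)\nabla^j\mathbf{y}_n$ in (\ref{eq:GrMi_mod_pr}) equals $\mathbf{B}(\mathbf{I}-\mathbf{P})\,p(t_n+\tau)$, where $p$ is the degree-$(k-1)$ polynomial interpolating the exact solution at $t_n,\dots,t_{n-k+1}$. Hence $\mathbf{e}(\tau):=\widetilde{\mathbf{y}}(\tau)-\mathbf{y}(t_n+\tau)$ solves $\mathbf{e}'(\tau)=\mathbf{B}(\mathbf{I}-\mathbf{P})\bigl(p(t_n+\tau)-\mathbf{y}(t_n+\tau)\bigr)+\mathbf{B}\mathbf{P}\,\mathbf{e}(\tau)$ with $\mathbf{e}(0)=0$. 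I would then combine the classical interpolation estimate $\|p(t_n+\tau)-\mathbf{y}(t_n+\tau)\|=\mathcal{O}(\Delta t^{k})$ on $[0,\Delta t]$ with the variation-of-constants formula (or Gronwall's lemma): integrating over an interval of length at most $\Delta t$ gains one further power of $\Delta t$, giving $\widetilde{\mathbf{y}}(\Delta t)=\mathbf{y}(t_{n+1})+\mathcal{O}(\Delta t^{k+1})$. Note this single estimate absorbs both the quadrature error on the coarse integral in (\ref{eq:GrMi_tmp5}) and the freezing of the fine integrand.

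\textbf{Step 2 (the inner Adams-Bashforth integration is order $k$ over $[0,\Delta t]$).} Next I would bound $\widetilde{\mathbf{y}}_{1}-\widetilde{\mathbf{y}}(\Delta t)$, where $\widetilde{\mathbf{y}}_1=\widetilde{\mathbf{y}}_{p/p}$ results from $p$ steps of the $k$-step Adams-Bashforth scheme applied to (\ref{eq:GrMi_mod_pr}) with step $\Delta\tau=\Delta t/p$, started from the exact past values. The crucial observation is that $\widetilde{\mathbf{y}}$ has $\tau$-derivatives up to order $k+1$ bounded uniformly in $\Delta t$: since $\nabla^j\mathbf{y}_n=\mathcal{O}(\Delta t^j)$ for a smooth solution, the polynomial $p(t_n+\cdot)$ has $\Delta t$-uniform coefficients and degree $k-1$, so $p^{(k)}\equiv 0$, and differentiating (\ref{eq:GrMi_mod_pr}) repeatedly keeps the remaining $\mathbf{B}\mathbf{P}$-terms bounded. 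The local truncation error of the $k$-step Adams-Bashforth method is therefore $\mathcal{O}(\Delta\tau^{k+1})$ per micro-step; summing the $p$ micro-steps and using the standard zero-stability estimate for linear multistep methods (with $p$ fixed) yields $\widetilde{\mathbf{y}}_1=\widetilde{\mathbf{y}}(\Delta t)+p\cdot\mathcal{O}(\Delta\tau^{k+1})=\widetilde{\mathbf{y}}(\Delta t)+\mathcal{O}(\Delta t^{k+1})$. Here it matters that the \emph{interval length} $\Delta t=p\,\Delta\tau$ itself tends to zero, which is precisely what upgrades the usual $\mathcal{O}(\Delta\tau^{k})$ global bound of Adams-Bashforth to $\mathcal{O}(\Delta t^{k+1})$; one also records that the fine starting values $\widetilde{\mathbf{y}}_{-\ell/p}=\mathbf{P}\mathbf{y}(t_n-\ell\Delta t/p)$ are exact, so no error enters through the initialization in Step 1 of the algorithm.

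\textbf{Conclusion and main obstacle.} Combining the two steps, $\mathbf{y}_{n+1}=\widetilde{\mathbf{y}}_1=\widetilde{\mathbf{y}}(\Delta t)+\mathcal{O}(\Delta t^{k+1})=\mathbf{y}(t_{n+1})+\mathcal{O}(\Delta t^{k+1})$, which is order-$k$ consistency; the case $\mathbf{P}=\mathbf{0}$ or $p=1$ reduces to the classical statement for AB$k$ and provides a sanity check. I expect the main obstacle to be the bookkeeping in Step 2, namely certifying that all error constants -- in the interpolation bound, in the Gronwall estimate, and in the multistep truncation error -- are uniform in $\Delta t$; in particular one must be careful that the $\Delta t$-scaling hidden in $\nabla^j\mathbf{y}_n\sim\Delta t^j$ is exactly what is needed to keep the $\tau$-derivatives of $\widetilde{\mathbf{y}}$ bounded, and that the factor $p$ accumulated by summing micro-steps does not spoil the $\Delta t\to 0$ limit.
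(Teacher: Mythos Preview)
The paper does not prove this proposition here; it simply cites \cite{GrMi_GM11}. Your two-stage decomposition---first bounding $\widetilde{\mathbf y}(\Delta t)-\mathbf y(t_{n+1})$ via the interpolation error of the coarse driver and Gronwall, then bounding $\widetilde{\mathbf y}_1-\widetilde{\mathbf y}(\Delta t)$ via the local truncation error of the inner AB$k$ scheme summed over $p$ micro-steps---is the natural route and is sound. The observation that $\nabla^j\mathbf y_n=\mathcal O(\Delta t^{j})$ makes the $\tau$-derivatives of $\widetilde{\mathbf y}$ uniformly bounded is exactly the right ingredient.

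One point needs more care than your phrase ``the fine starting values \dots are exact'' suggests. The algorithm sets $\widetilde{\mathbf y}_{-\ell/p}:=\mathbf P\,\mathbf y(t_n-\ell\Delta\tau)$, which are \emph{not} the exact values $\widetilde{\mathbf y}(-\ell\Delta\tau)$ of the auxiliary solution extended to negative $\tau$. What saves the argument is twofold: these values enter only through $\mathbf B\mathbf P$ and $\mathbf P^2=\mathbf P$, so only $\mathbf P\widetilde{\mathbf y}(-\ell\Delta\tau)$ matters; and your Step~1 estimate applies equally on $[-(k-1)\Delta\tau,0]$ (the interpolation bound for $p(t_n+\tau)-\mathbf y(t_n+\tau)$ holds there), giving $\mathbf P\widetilde{\mathbf y}(-\ell\Delta\tau)=\mathbf P\,\mathbf y(t_n-\ell\Delta\tau)+\mathcal O(\Delta t^{k+1})$. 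Hence the initialization contributes an $\mathcal O(\Delta t^{k+1})$ perturbation, multiplied by $\Delta\tau$ in the first few micro-steps, and does not affect the order. With that adjustment your argument is complete.
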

\begin{proof} See \cite{GrMi_GM11}.\end{proof}

\section{Numerical results}
Here we present numerical experiments that validate the expected order of convergence of the above LTS methods and demonstrate their
usefulness in the presence of complex geometry. First, we consider a simple one-dimensional test problem illustrate the stability properties
of the different LTS schemes presented above and to show that they yield the expected overall rate of convergence when combined with a spatial
finite element discretization of comparable accuracy, independently of the number of local time-steps $p$ used in the fine region.
Then, we illustrate the versatility of our LTS schemes by simulating the propagation of a circular wave in a square cavity with a small sigma-shaped hole.

\subsection{Stability}
We consider the one-dimensional homogeneous damped wave equation (\ref{eq:GrMi_model_eq_1}) with constant wave speed $c = 1$ and
damping coefficient $\sigma = 0$ on the interval $\Omega = [0\, , \,6]$. Next, we divide $\Omega$ into three
equal parts.  The left and right intervals, $[0\, , \,2]$ and $[4\, , \,6]$, respectively, are discretized with
an equidistant mesh of size $h^{\mbox{\scriptsize coarse}}$, whereas the interval $\Omega_f = [2\, , \,4]$ is discretized with an
equidistant mesh of size $h^{\mbox{\scriptsize fine}} = h^{\mbox{\scriptsize coarse}} / p$. Hence, the two outer intervals
correspond to the coarse region whereas the inner interval $[2\, , \,4]$ to the refined region. In \cite{GrMi_DG09}, we have studied numerically the stability of
the LTS-LF2($p$) and the LTS-LFME4($p$) methods. To determine the range of values $\Delta t$ for which the LTS-LF2($p$) scheme is stable, the eigenvalues of
$(\Delta t^2 / 4) {\mathbf A}_p$ (${\mathbf A}_p$ is defined by (\ref{eq:GrMi_matrix_Ap})) for varying $\Delta t / \Delta t_{LF}$ are computed, where $\Delta t_{LF}$ denotes
the largest time-step allowed by the standard leap-frog method. The LTS-LF2($p$) scheme is stable for any particular $\Delta t$ if all corresponding eigenvalues
lie between zero and one; otherwise, it is unstable. We have observed that the largest time step allowed by the LTS-LF2($p$) scheme is only about 60\% of $\Delta t_{LF}$.
A slight extension (overlap) of the region where local time steps are used into that part of the mesh immediately adjacent to the refined region
typically improves the stability of the LTS-LF2($p$) scheme. Moreover, the numerical results suggested that an overlap by one element when combined with a
${\cal P}^1$ continuous FE discretization (with mass lumping), or by two elements when combined with a IP-DG  discretization, permits the use of the maximal (optimal)
time step $\Delta t_{LF}$. The numerical results also suggested that an overlap by one element for the IP-DG  discretization is needed for the
optimal CFL stability condition of the LTS-LFME4($p$) independently of $p$. Remarkably, no overlap is needed for the LTS-LFME4($p$) scheme to remain
stable with the optimal time-step when combined with the continuous ${\cal P}^3$ elements.

In \cite{GrMi_GM11}, we have considered the above one-dimensional problem with $\sigma = 0.1$. We have written the LTS-AB$k$($p$) scheme as a one-step method and
than studied numerically it stability when combined with a spatial finite element discretization of comparable accuracy. For a spatial discretization with
standard continuous, IP-DG or nodal DG finite elements, we have obtained that the maximal time-step $\Delta t_{p}$ allowed by the LTS-AB\,$2$($p$) scheme is
about 80 \% of the optimal time-step $\Delta t_{AB2}$ (the largest time-step allowed by the standard two-step AB method) independently of $h$, $p$ and $\sigma$;
moreover, the CFL stability condition of the LTS-AB\,$3$($p$) and LTS-AB\,$4$($p$) schemes is optimal for all $h$, $p$ and $\sigma$.

\subsection{Convergence}
We consider the one-dimensional homogeneous model problems (\ref{eq:GrMi_model_eq_1}) and (\ref{eq:GrMi_model_eq_2}) with constant wave speed $c = 1$ and
damping coefficient $\sigma = 0.1$ on the interval $\Omega = (0\, , \,6)$. The initial conditions are chosen to yield
the exact solution
\begin{equation*} \begin{split}
u(x, t) & = \frac{2 e^{-\frac{\sigma t}{2}}}{\sqrt{4\pi^2 - \sigma^2}} \sin(\pi x)
\sin\left ( \frac{t}{2} \sqrt{4\pi^2 - \sigma^2}\right ) \,, \\
v(x, t) & =  \frac{\partial u}{\partial t}(x, t) \,, \quad {\mathbf w} (x, t) = - \nabla u (x, t) \,.
\end{split} \end{equation*}
Again, we divide $\Omega$ into three equal parts. The left and right intervals, $[0, 2]$ and $[4, 6]$,
respectively, are discretized with an equidistant mesh of size $h^{\mbox{\scriptsize coarse}}$, whereas the interval
$[2, 4]$ is discretized with an equidistant mesh of size $h^{\mbox{\scriptsize fine}} = h^{\mbox{\scriptsize coarse}} / p$. Hence, the two outer
intervals correspond to the coarse region and the inner interval $[2\, , \,4]$ to the refined region.

\begin{figure}[t!]
\centering
\begin{tabular}{cc}
\includegraphics[scale=0.2]{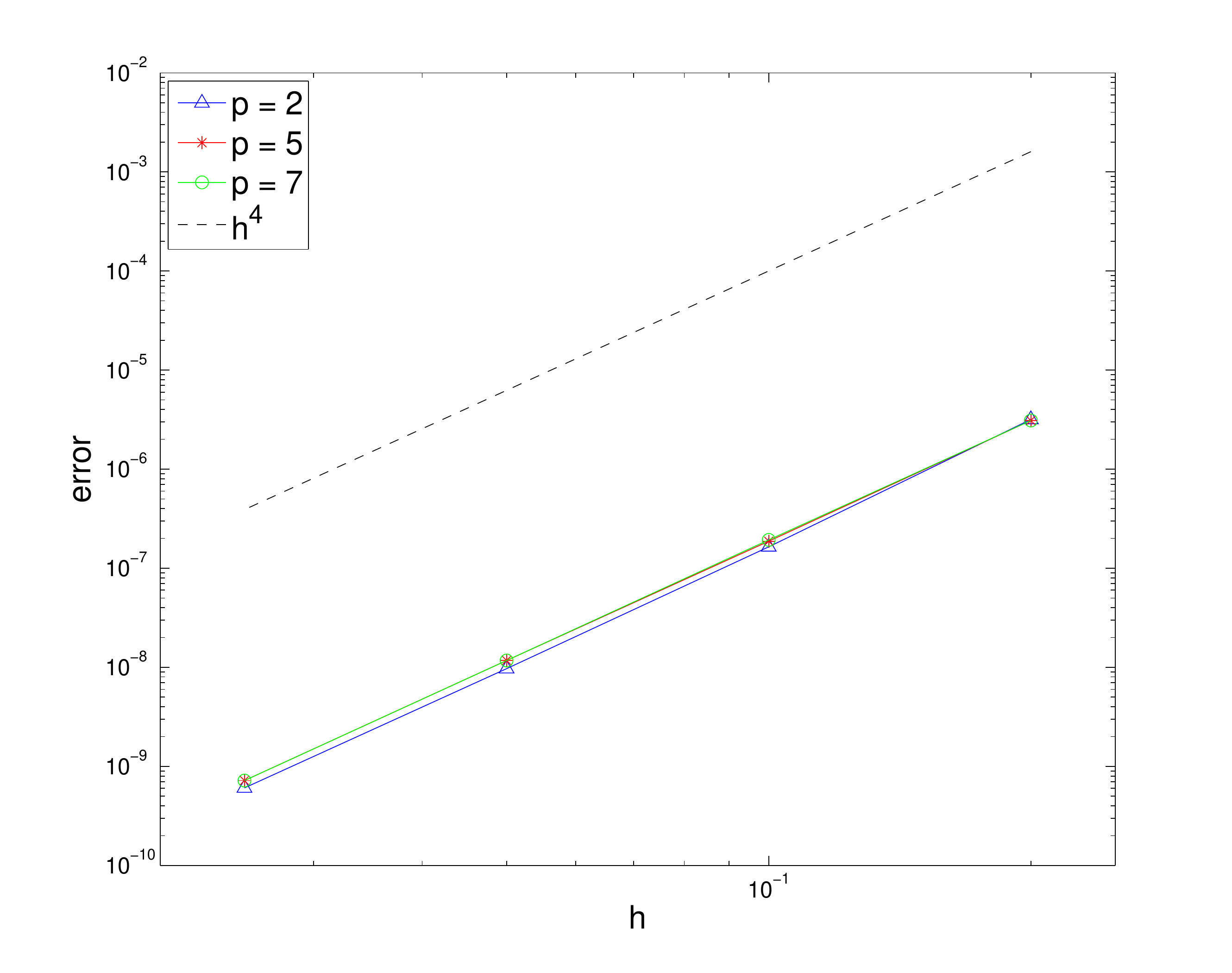}  &
\includegraphics[scale=0.2]{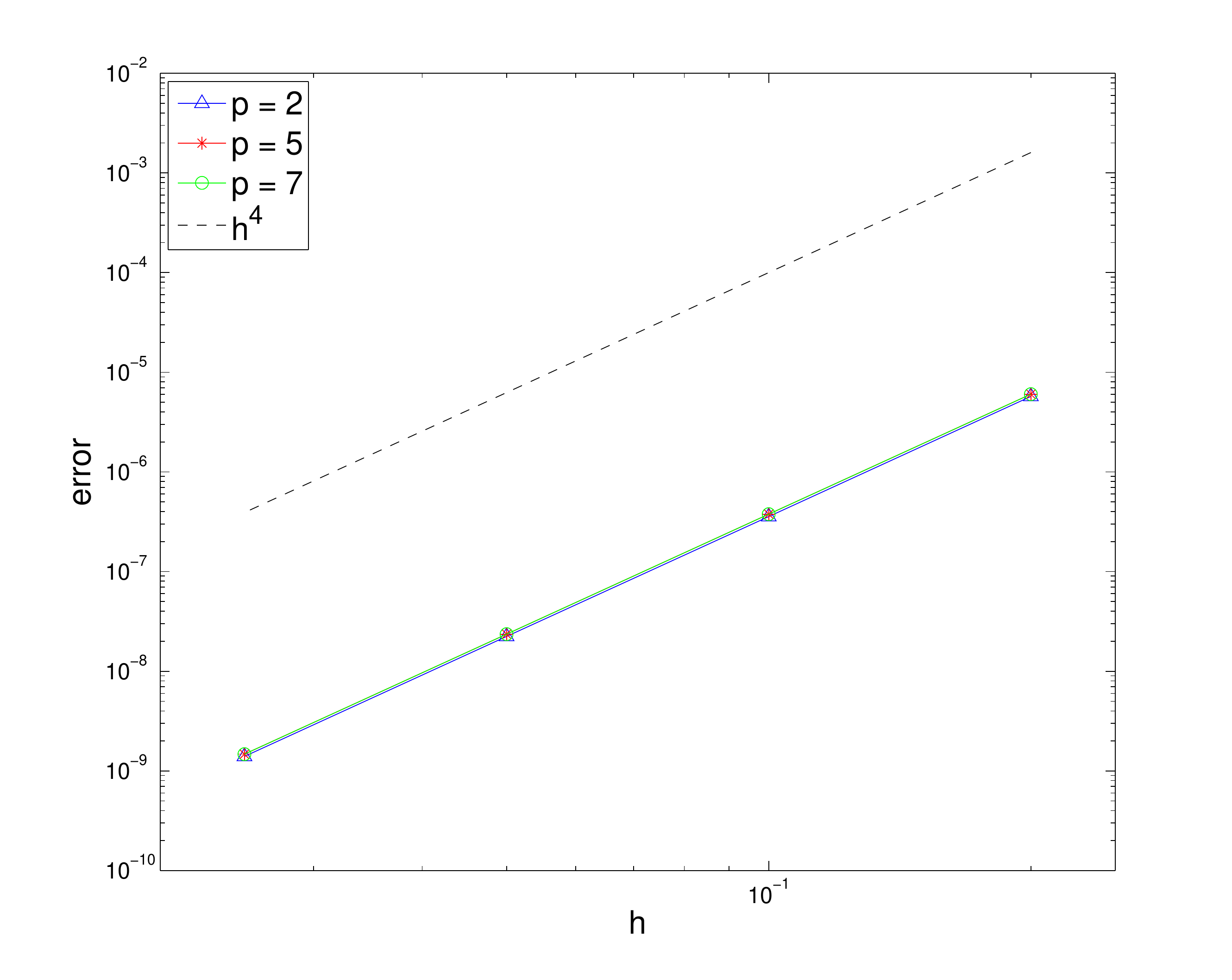}  \\
{\footnotesize (a) continuous FE ($h$ = 0.2, 0.1, 0.05, 0.025)} & {\footnotesize (b) IP-DG ($h$ = 0.2, 0.1, 0.05, 0.025)}
\end{tabular}
\begin{tabular}{c}
\includegraphics[scale=0.2]{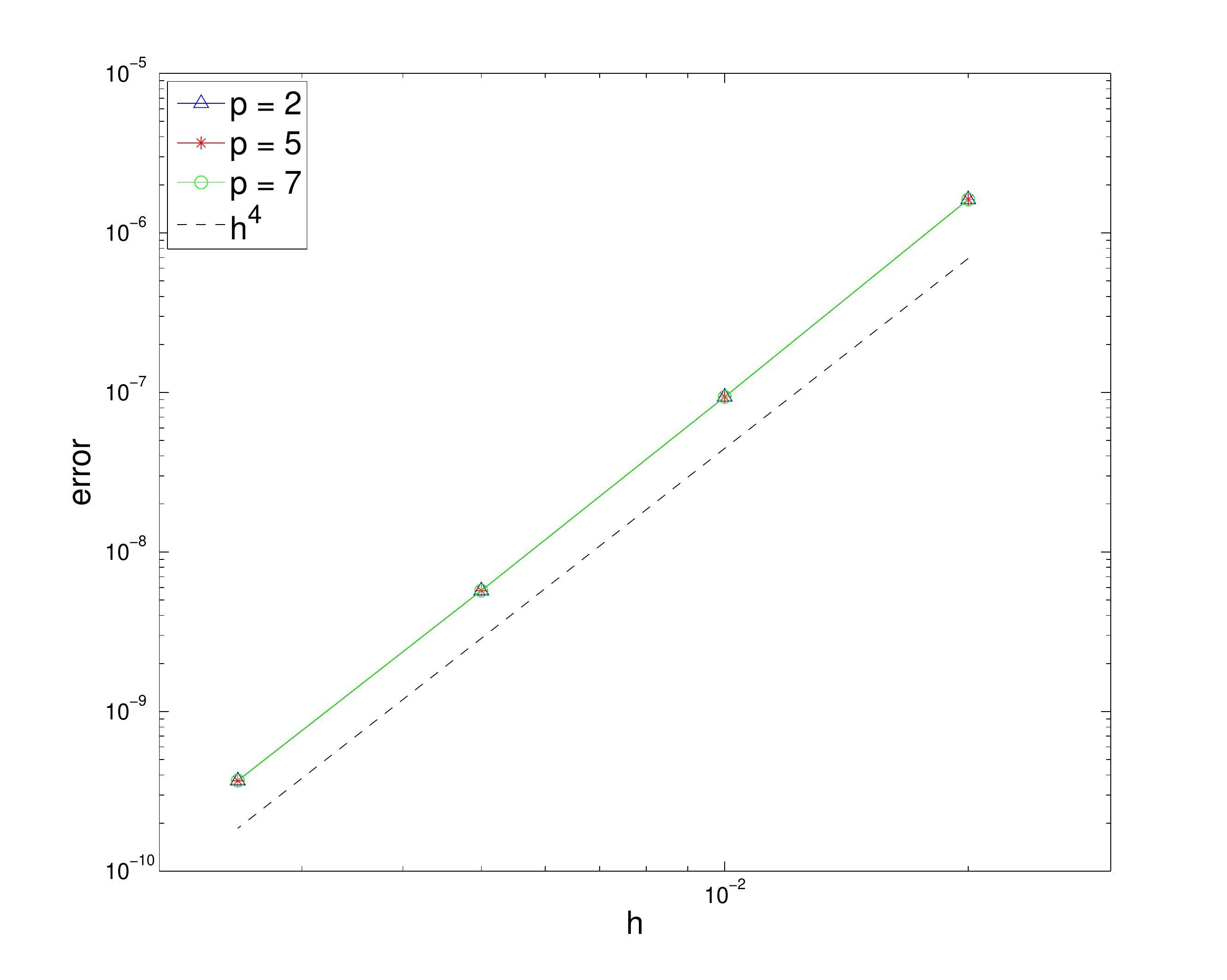} \\ {\footnotesize (c) nodal DG ($h$ = 0.02, 0.01, 0.005, 0.0025)}
\end{tabular}
\caption{LTS-AB$4$($p$) error vs.\ $h = h^{\mbox{\scriptsize coarse}}$ for ${\cal P}^3$ finite elements with $p=2, 5, 7$.} \label{fig:GrMi_1d_err_P3}
\end{figure}

First, we consider a ${\cal P}^3$ continuous FE discretization with mass lumping and a sequence of increasingly finer meshes. For every time-step $\Delta t$, we shall
take $p \geq 2$ local steps of size $\Delta \tau = \Delta t / p$ in the refined region, with the fourth-order time-stepping scheme LTS-AB$4$($p$).
The first three time-steps of each LTS-AB$4$($p$) scheme are initialized by using the exact solution. According to our results on stability, we set
$\Delta t = \Delta t_{AB4}$, the corresponding largest possible time-step allowed by the AB approach of order four on an equidistant mesh with
$h = h^{\mbox{\scriptsize coarse}}$. As we systematically reduce the global mesh size $h^{\mbox{\scriptsize coarse}}$, while simultaneously reducing $\Delta t$,
we monitor the $L^2$ space-time error in the numerical solution $\| u(\cdot, T) - u^h(\cdot, T)\|_{L^2(\Omega)}$ at the final time $T = 10$. In frame (a)
of Fig.~\ref{fig:GrMi_1d_err_P3}, the numerical error is shown vs.\ the mesh size $h = h^{\mbox{\scriptsize coarse}}$:
regardless of the number of local time-steps $p =2$, 5 or 7, the numerical method converges with order four.

We now repeat the same experiment with the IP-DG ($\alpha = 20$ in (\ref{eq:GrMi_param_alpha})) and the nodal DG discretizations with ${\cal P}^3$-elements.
As shown in frames (b) and (c) of Fig.~\ref{fig:GrMi_1d_err_P3}, the LTS-AB$4$($p$) method again yields overall fourth-order convergence independently of $p$.

\begin{remark}
We have obtained similar convergence results for other values of $p$ and $\sigma$. In summary, we observe the optimal rates convergence of order $k$ for the
LTS-AB$k$($p$) schemes as well as for the LTS-LF2($p$), LTS-LSME4($p$) and LTS-LFCN2($p$) schemes, regardless of the spatial FE discretization and independently
of the number of local time-steps $p$ and the damping coefficient $\sigma$. For more details, we refer to \cite{GrMi_DG09, GrMi_GM10, GrMi_GM11}.
\end{remark}

\subsection{Two-dimensional example}
To illustrate the usefulness of the LTS method presented above, we consider (\ref{eq:GrMi_model_eq_1}) in a square cavity $\Omega = (0, 1)^2$,
with a small sigma-shaped hole - see Figure~\ref{fig:GrMi_2d_domain}. We set the constant wave speed $c = 1$ and the damping coefficient
$$
\sigma(\mathbf{x}) = \left \{ \begin{array}{cl} 10\,, & x_2 < 0.5 \\0.1 \,, & \mbox{otherwise}\,. \end{array} \right.
$$
We impose homogeneous Neumann conditions on the boundary of $\Omega$ and choose as initial conditions
\begin{equation*} 
u_0({\mathbf x}) = 
\exp\left(-\Vert \mathbf{x}-\mathbf{x}_0 \Vert^2/r^2\right)\,,\qquad
v_0(\mathbf{x}) = 0 \,,
\end{equation*}
where $\mathbf{x}_0=(0.45, 0.55)$ and $r=0.012$.

\begin{figure}[t!]
\centerline{\includegraphics[scale=0.5]{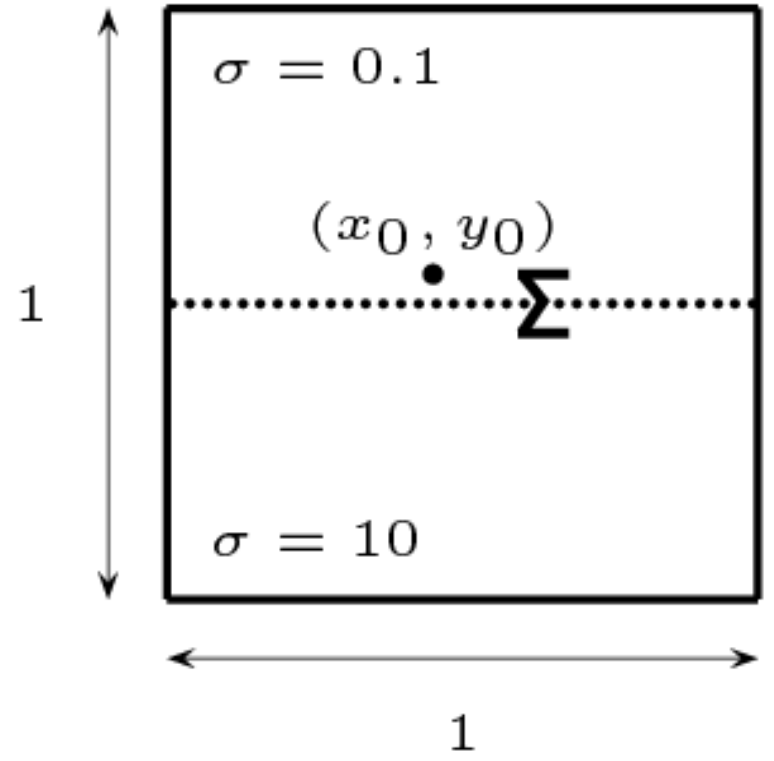}}
\caption{Two-dimensional example: the computational domain $\Omega$.} \label{fig:GrMi_2d_domain}
\end{figure}

For the spatial discretization we opt for the ${\cal P}^2$ continuous finite elements with mass lumping. First,
$\Omega$ is discretized with triangles of minimal size $h^{\mbox{\scriptsize coarse}}~=~0.03$. However, such triangles
do not resolve the small geometric features of the sigma-shaped hole, which require
$h^{\mbox{\scriptsize fine}} \approx h^{\mbox{\scriptsize coarse}} / 7$, as shown in Figure \ref{fig:GrMi_2d_mesh}.
Then, we successively refine the entire mesh three times, each time splitting every triangle into four.
Since the initial mesh in $\Omega$ is unstructured, the boundary between the fine and coarse mesh is not well-defined.
Given $h^{\mbox{\scriptsize coarse}}$, here the fine mesh corresponds to all triangles with
$h < 0.75 h^{\mbox{\scriptsize coarse}}$ in size, that is the darker triangles in Figure \ref{fig:GrMi_2d_mesh}. The
corresponding degrees of freedom in the finite element solution are then selected merely by setting to one the
corresponding diagonal entries of the matrix ${\mathbf P}$.

\begin{figure}[t!]
\begin{center}
\centerline{\includegraphics[scale=1]{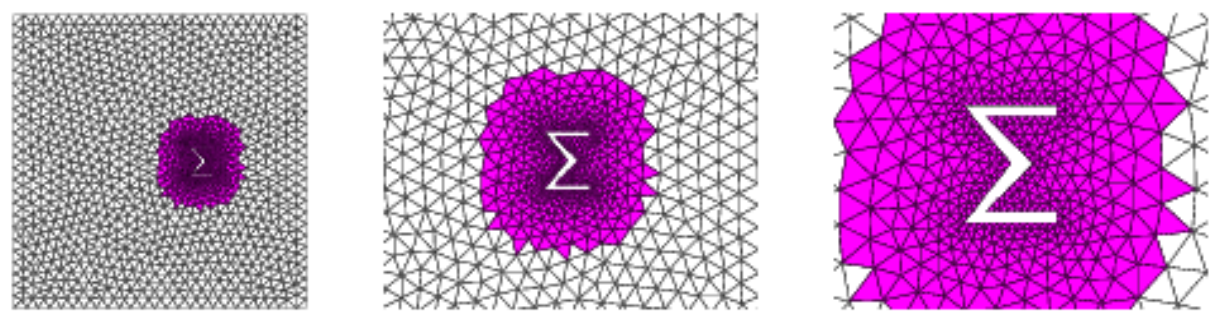}}
\end{center}
\caption{The triangular initial mesh at various magnification rates: the darker triangles belong to the ``fine'' mesh.} \label{fig:GrMi_2d_mesh}
\end{figure}

\begin{figure}[t!]
\centerline{\includegraphics[scale=0.8]{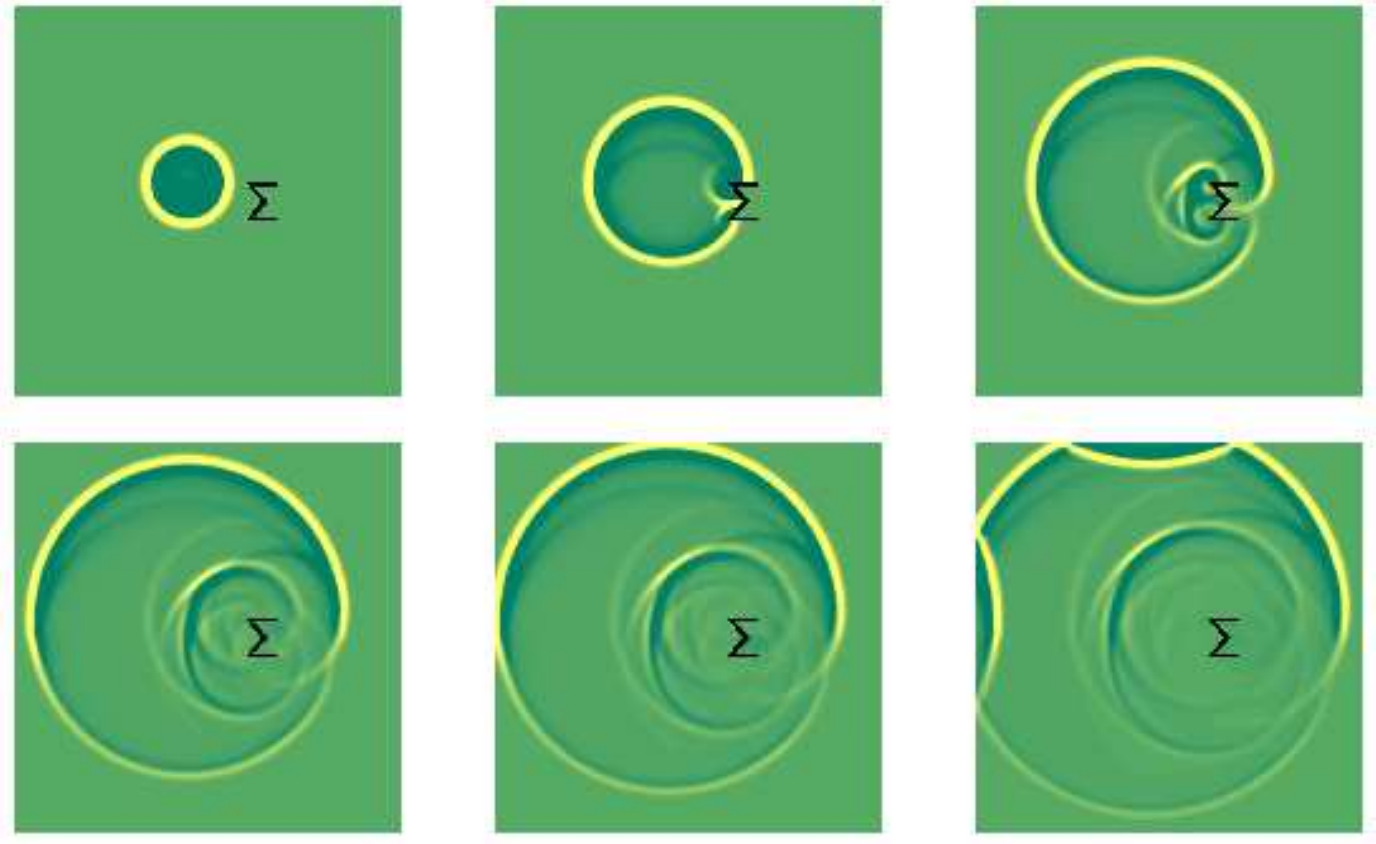}}
\caption{Gaussian pulse penetrating a cavity with a small sigma-shaped hole.
The solution is shown at times $t=0.1$,  $0.2$, $0.3$, $0.4$, $0.44$ and $0.5$.} \label{fig:GrMi_2d_sol}
\end{figure}

For the time discretization, we choose the third-order LTS-AB$3$($7$) time-stepping scheme with $p = 7$, which for every time-step $\Delta t$ takes seven local
time-steps inside the refined region. Thus, the numerical method is third-order accurate in both space
and time under the CFL condition $\Delta t = 0.07 \, h^{\mbox{\scriptsize coarse}}$, determined experimentally. If instead the same (global) time-step
$\Delta t$ was used everywhere inside $\Omega$, it would have to be about seven times smaller than necessary in most of $\Omega$. As a starting procedure,
we employ a standard fourth-order Runge-Kutta scheme.

In Fig.~\ref{fig:GrMi_2d_sol}, snapshots of the numerical solution are shown at different times. The circular wave, initiated by the Gaussian pulse,
propagates outward until it impinges first on the sigma-shaped hole and later on the upper and left boundaries of $\Omega$. The reflected waves move back into
$\Omega$ while multiple reflections occur both at the obstacle and along the interface at $x_2 = 0.5$. As the waves cross that interface and penetrate the
lower part of $\Omega$, they are strongly damped.



\section{Concluding remarks}
Starting from the classical leap-frog (LF) or Adams-Bashforth (AB) methods, 
we have presented explicit local time-stepping (LTS) schemes 
for wave equations, either with or without damping. By allowing
arbitrarily small time-steps precisely where the smallest elements 
in the mesh are located, these LTS schemes circumvent the crippling effect of locally
refined meshes on explicit time integration. 

When combined with a spatial finite element discretization with an essentially diagonal mass matrix, 
the resulting LTS schemes remain fully explicit. Here three such finite element discretizations
were considered: standard $H^1$-conforming finite elements with mass-lumping, 
an IP-DG formulation, and nodal DG finite elements.
In all cases, our numerical results demonstrate that the resulting fully discrete numerical
schemes yield the expected space-time optimal convergence rates. Moreover, the LTS-AB$(k)$
schemes of order $k \geq 3$ have optimal CFL stability properties regardless of the mesh size $h$, 
the global to local step-size ratio $p$, or the dissipation $\sigma$. Otherwise, the CFL condition 
of the LTS scheme may be sub-optimal; then, 
by including a small overlap of the fine and the coarse region, the CFL condition of the resulting
LTS scheme can be significantly enhanced.

Since the LTS methods presented here are truly explicit, their parallel implementation is straightforward. Let $\Delta t$ denote the 
time-step imposed by the CFL condition in the coarser part of the mesh. Then, during every (global) time-step $\Delta t$, each local time-step of size 
$\Delta t / p$ inside the fine region of the mesh simply corresponds to sparse matrix-vector multiplications that only involve degrees of freedom
associated with the fine region of the mesh. Those ``fine'' degrees of freedom can be selected individually and without any restriction by setting the 
corresponding entries in the diagonal projection matrix $P$ to one; in particular, no adjacency or coherence in the numbering of the degrees of freedom is assumed. 
Hence the implementation is straightforward and requires no special data structures.  

In the presence of multi-level mesh refinement, each local time-step in the fine
region can itself include further local time-steps inside a smaller subregion with an even higher degree of local mesh refinement.
The explicit local time-stepping schemes developed here for the scalar damped wave 
equation immediately apply to other 
damped wave equations, such as in electromagnetics 
or elasticity; in fact, they can be used for general linear first-order hyperbolic systems.


\end{document}